\documentclass[12pt,reqno]{amsart}
\UseRawInputEncoding
\usepackage{amsthm}
\usepackage{color}
\usepackage{amssymb, amsmath, amsfonts, latexsym}
\usepackage{enumerate}
\usepackage{bbm}
\usepackage{hyperref}

\usepackage{multimedia}

\setlength{\topmargin}{0cm} 
\setlength{\oddsidemargin}{0cm}
\setlength{\evensidemargin}{0cm} 
\setlength{\textwidth}{16truecm}
\setlength{\textheight}{22.8truecm}

\newtheorem{thm}{Theorem}[]
\newtheorem{lem}[thm]{Lemma}

\newtheorem{rmk}[thm]{Remark}
\theoremstyle{definition}

\numberwithin{equation}{section} \theoremstyle{remark}

\title[Nonstandard limits for $\beta$-Jacobi ensembles]{\bf Nonstandard limit theorems and large deviation for $\beta$-Jacobi ensembles with a different scaling}  
\author[Y-T. Ma]{Y\MakeLowercase{utao} M\MakeLowercase{a}}
\thanks{ Yutao Ma: School of Mathematical Sciences, Beijing Normal University, 100875 Beijing, China. \\ E-mail: mayt@bnu.edu.cn}

\author[Y-H. Mao]{Y\MakeLowercase{ong}-H\MakeLowercase{ua} M\MakeLowercase{ao}}
\thanks{ Yong-Hua Mao: School of Mathematical Sciences, Beijing Normal University, 100875 Beijing, China. \\ E-mail: maoyh@bnu.edu.cn}

\author[S-Y. Wang]{S\MakeLowercase{iyu} W\MakeLowercase{ang}*}
\thanks{*Correspongding author.  Siyu Wang: School of Mathematical Sciences, Beijing Normal University, 100875 Beijing, China.  E-mail: wang\_siyu@mail.bnu.edu.cn}
\thanks{The research of Yutao Ma and Yong-Hua Mao is supported in part by NSFC 12171038, 11871008, the National Key R$\&$D Program of China (No. 2020YFA0712900) and 985 Projects. }

\begin{document}

\begin{abstract}
We consider $\beta$-Jacobi ensembles with parameters $p_1, p_2\geq n.$ We prove that the empirical measure of the rescaled Jacobi ensembles converges weakly to a modified Watcher law via the spectral measure method, which revisits the weak limits obtained in \cite{MaLDPJ} while replacing the condition $\beta n\!>\!> \log n$ by $\beta n\!>\!>1.$     
  We also provide the central limit theorem and the large deviation for the corresponding rescaled spectral measure.
\end{abstract}
\maketitle
{\bf Keywords:} $\beta$-Jacobi ensemble;  empirical measure;  spectral measure; large deviation.

{\bf AMS Classification Subjects 2020:} 60F10, 15B52

\section{introdution } 
A $\beta$-Jacobi ensemble, also called the $\beta$-MANOVA ensemble, is a random vector on $[0, 1]^n$ whose joint density function is  given by 
\begin{equation}\label{dfjacobi}
	f_{\beta, p_1, p_2}(x_1, \cdots, x_n)=c_{\beta, p_1, p_2}\prod_{1\le i<j \le n}|x_i-x_j|^{\beta} \prod_{i=1}^n x_i^{\frac{\beta}{2}(p_1-n+1)-1 } (1-x_i)^{\frac{\beta}{2}(p_2-n+1)-1 } 
\end{equation} 
with parameters $p_1, p_2\geq n\geq 1$ and $\beta>0$ and $c_{\beta, p_1, p_2}$ being the normalizing constant. 

Let $\boldsymbol{\lambda}=(\lambda_1, \cdots, \lambda_n)$ be the $\beta$-Jacobi ensembles with joint density function \eqref{dfjacobi}. 
When $\beta=1, 2$ and $4,$ the function \eqref{dfjacobi} can be regarded as the joint density function of the eigenvalues of a particular random matrix 
 $$A=(X X^*+YY^*)^{-\frac12}X X^*(X X^*+YY^*)^{-\frac12}$$ 
 with $X,Y$ are $n\times (\beta p_1)$ and $n\times \, (\beta p_2) (p_1, p_2\geq n)$ matrices with i.i.d. real ($\beta=1$), complex ($\beta=2$) or quaternion ($\beta=4$) standard normal distributed entries. The spectral properties of the $\beta$-Jacobi ensemble can be applied to multivariate analysis of variance (see \cite{Muihead}). In statistical mechanics, the $\beta$-Jacobi ensemble arises as a model of log gas with logarithmic interactions, confined to the interval $[0, 1]$ (see \cite{Dyson62}, \cite{Forrester}). One central object in the asymptotic study of random matrix ensembles is the empirical measures. 
  
 There are different ways to obtain the weak limits of the empirical measures. See Johansson \cite{Johnsson} for an approach directly based on the joint density function, Dumitriu and Edelman \cite{DE06} for a combinatorial method based on the random tridiagonal matrix models, see also alternative ways in the books \cite{AnG} and \cite{PS}. In this paper,  we follow the spectral measure method in \cite{Dette} (see also \cite{Nagel}, \cite{Trinh}, \cite{TT-21}) for the weak limit of the empirical measures.  
 In \cite{Nagel}, they offered the Marchenko-Pastur law and the semicircle law as the weak limits of the empirical measures for different scaling $\lambda_i$ via the spectral measure method when $p_1$ and $p_2$ depend on $n$ and $\beta>0$ fixed. Furthermore, if $n/p_1\to 0$ and $p_1/p_2 \to \sigma >0,$ they showed that the empirical measure of $$\left((1+\sigma)\sqrt{\frac{(1+\sigma)p_2}{\sigma n}}\left(\lambda_i-\frac{\sigma}{\sigma+1}\right)\right)_{1\le i\le n}$$ converges weakly to the semicircle law with probability one under an additional condition 
\begin{align}\label{condsemi}
	\sqrt{\frac{ p_2}{n}}\left(\sigma-\frac{p_1}{p_2}\right)\to 0.
\end{align}
By contrast, under the conditions $n/p_1\to \gamma\in (0, 1]$ and $ p_1/p_2 \to 0 ,$ the empirical measure of $(p_2\lambda_i/p_1)_{1\le i\le n}$ converges weakly to the Marchenko-Pastur law with probability one. 

In \cite{MaLDPJ}, setting $Y_i=((p_1+p_2)\lambda_i-p_1)/\sqrt{n p_1}$ for $1\le i\le n$ and $L_n:=\frac1n\sum_{i=1}^n\delta_{Y_i},$ they established the large deviation for $L_n$ when $$p_1/p_2\to \sigma\in [0, +\infty), \quad n/p_1\to\gamma \in [0, 1]$$ and $\beta n\!>\!>\log n.$ As a consequence, they proved that $L_n$ converges weakly to $\tilde{\nu}_{\gamma, \sigma}, $ whose density function is given by 
\begin{equation}\label{dens}\tilde{h}_{\gamma, \sigma}(x)= \begin{cases} \frac{\sigma\sqrt{\gamma}}{1+\sigma}h_{\gamma, \sigma}\left(\frac{\sigma}{1+\sigma}(\sqrt{\gamma} x+1)\right),  & 0<\sigma\gamma\le 1\\
\sqrt{\gamma} h_{\gamma}(1+\sqrt{\gamma} x),  & \sigma=0,  0< \gamma\le 1; \\
\frac{1+\sigma}{2\pi}\sqrt{\frac{4}{1+\sigma}-x^2}, & \gamma=0, \sigma\geq 0.\\
  \end{cases}\end{equation} 
  Here $h_{\gamma}$ is the density function of the well-known Marchenko-Pastur law $\nu_{\gamma}$ and $h_{\gamma, \sigma}$ is the density function of the Wachter law $\nu_{\gamma, \sigma},$ which are given as follows 
  \begin{equation}\label{hgam} h_{\gamma, \sigma}(x)=\frac{1+\sigma}{2\pi\sigma\gamma}\frac{\sqrt{(x-u_1)(u_2-x)}}{x(1-x)}1_{[u_1, u_2]}(x); \end{equation} 
$$h_{\gamma}(x)=\frac{1}{2\pi\gamma x}\sqrt{(x-\gamma_{1})(\gamma_{2}-x)}1_{\gamma_1\le x\le\gamma_2}, $$  
 where $\gamma_1=(\sqrt{\gamma}-1)^2, \, \gamma_2=(\sqrt{\gamma}+1)^2$ and $$u_1=\frac{\sigma}{1+\sigma}\left(\sqrt{1-\frac{\sigma\gamma}{1+\sigma}}-\sqrt{\frac{\gamma}{1+\sigma}}\right)^2, \quad u_2=\frac{\sigma}{1+\sigma}\left(\sqrt{1-\frac{\sigma\gamma}{1+\sigma}}+\sqrt{\frac{\gamma}{1+\sigma}}\right)^2.$$
The weak limit obtained in \cite{MaLDPJ} unified and improved the related limits in \cite{Nagel}, however with an additional condition $\beta n \!>\!>\log n$. 
One main object of this paper is to revisit the weak limits of $\mu_n$ in \cite{MaLDPJ} via spectral measure method, which can help to relax the limitation $\beta n\!>\!>\log n.$ This will be done via the weak limit of the corresponding spectral measure. Here comes our main result. 

\begin{thm}\label{main}
	Let $\boldsymbol{\lambda}=(\lambda_1, \cdots, \lambda_n)$ be the $\beta$-Jacobi ensembles with joint density function \eqref{dfjacobi} and set $Y_i=((p_1+p_2)\lambda_i-p_1)/\sqrt{n p_1}$
	for $1\le i\le n$ and $L_n=\frac{1}{n}\sum_{i=1}^n \delta_{Y_i}.$ 
	Suppose 
$$({\bf H}): \qquad\lim\limits_{n\to\infty}\frac{n}{p_1}=\gamma\in [0, 1], \quad \lim\limits_{n\to\infty}\frac{p_1}{p_2}=\sigma\in [0, \gamma^{-1}), \quad \text{and} \quad \lim_{n\to\infty}\beta n=+\infty.$$		
Then $L_{n}$ converges weakly to the probability  $\widetilde \nu_{\gamma,\sigma}$  almost surely, and the density function of $\tilde{\nu}_{\gamma, \sigma}$ is given in \eqref{dens}. 
 \end{thm}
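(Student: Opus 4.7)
The strategy is to apply the tridiagonal-matrix-model moment method for $\beta$-Jacobi ensembles, as developed in \cite{Dette,Nagel,Trinh,TT-21}. The spectrum $(\lambda_1,\dots,\lambda_n)$ is realized as the eigenvalues of an explicit $n\times n$ symmetric tridiagonal Jacobi matrix $J_n$ whose $2n-1$ non-trivial entries are algebraic functions of independent Beta random variables with parameters depending on $n,p_1,p_2,\beta$. Set $\tilde J_n:=((p_1+p_2)J_n-p_1 I)/\sqrt{np_1}$, so that $\{Y_i\}$ is its spectrum and $\int x^k\,dL_n(x)=\tfrac{1}{n}\operatorname{tr}(\tilde J_n^k)$. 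The three laws $\tilde\nu_{\gamma,\sigma}$ in \eqref{dens} are compactly supported and hence moment-determined, so the theorem reduces to the almost sure convergence $\tfrac{1}{n}\operatorname{tr}(\tilde J_n^k)\to\int x^k\,d\tilde\nu_{\gamma,\sigma}(x)$ for every fixed $k\ge 1$.

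I would carry this out in three steps. \emph{(i) Deterministic expansion.} Under $({\bf H})$ the Beta parameters controlling the $j$-th row of $J_n$ are large; each variable concentrates around its mean with variance of order $(\beta n)^{-1}$. Writing each entry as mean plus fluctuation and expanding $\tilde J_n$ produces a deterministic ``profile'' tridiagonal matrix $\bar{\tilde J}_n$ with entries depending smoothly on $j/n$, plus a small random perturbation. \emph{(ii) Identification of the limit.} A Riemann-sum argument gives $\lim_n \tfrac{1}{n}\operatorname{tr}(\bar{\tilde J}_n^k)=\int x^k\,d\tilde\nu_{\gamma,\sigma}(x)$, verified by matching the limits of the profile entries against the Jacobi three-term recursion coefficients of the Wachter law (when $0<\sigma\gamma\le 1$), the Marchenko-Pastur law (when $\sigma=0$), and the semicircle law (when $\gamma=0$). \emph{(iii) Concentration.} The trace moment $\tfrac{1}{n}\operatorname{tr}(\tilde J_n^k)$ is a polynomial of bounded degree in independent Beta variables, each of influence $O(n^{-1})$; a bounded-differences / Efron-Stein estimate then gives fluctuations on scale $(\beta n)^{-1/2}$, and Borel--Cantelli yields the almost sure limit for each fixed $k$.

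The main difficulty will be step (iii) under the weaker hypothesis $\beta n\to\infty$ in $({\bf H})$, as opposed to $\beta n\gg\log n$ in \cite{MaLDPJ}: naive union bounds incur a logarithmic loss that is incompatible with almost sure convergence when $\beta n/\log n$ does not diverge. I would address this by exploiting the sparsity of the tridiagonal structure---only $O(k)$ entries contribute to each closed walk of length $k$ starting at a given diagonal site---to obtain a sharp second-moment estimate for $\tfrac{1}{n}\operatorname{tr}(\tilde J_n^k)$ that is summable in $n$ as soon as $\beta n\to\infty$. A secondary technical task is verifying that the three regimes in \eqref{dens} arise as a single continuous family of moment limits, which I would handle by parameterizing the Jacobi recursion coefficients of $\tilde\nu_{\gamma,\sigma}$ explicitly and checking continuity at the boundaries $\sigma=0$ and $\gamma=0$.
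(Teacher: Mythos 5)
You adopt the same tridiagonal model (Killip--Nenciu), but your route to the limit is fundamentally different from the paper's, and it has two concrete gaps.

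The paper does \emph{not} work with the empirical measure $L_n$ via the trace $\tfrac1n\operatorname{tr}(\widetilde{\mathcal J}_n^k)$. Instead it studies the \emph{spectral measure at site $1$}, $\mu_n$, defined by $\langle\mu_n,x^k\rangle=\widetilde{\mathcal J}_n^k(1,1)$, and then invokes Nagel's Theorem 4.2 that the Kolmogorov distance between $\mu_n$ and $L_n$ vanishes (a probabilistic fact about the $\beta$-ensemble eigenvector weights, which are Dirichlet and concentrate around $1/n$). The payoff is that $\widetilde{\mathcal J}_n^k(1,1)$ depends only on the $O(k)$ entries near the top-left corner, and these converge almost surely to \emph{constants}; identification of the limit is then the single-site Stieltjes transform computation of Lemma \ref{Jacobimu}. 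Your step (ii), ``matching the limits of the profile entries against the Jacobi recursion coefficients of the Wachter law,'' tacitly assumes the entrywise profile is constant along the diagonal. It is not. For $k\asymp tn$ with $t\in(0,1)$, the diagonal entries of $\widetilde{\mathcal J}_n$ converge (in mean) to a genuinely $t$-dependent function; e.g.\ with $\sigma=0$ one finds $\widetilde{\mathcal J}_n(tn,tn)\to\sqrt\gamma(1-2t)$ and $\widetilde{\mathcal J}_n(tn,tn+1)\to\sqrt{(1-\gamma t)(1-t)}$. So the Riemann-sum limit of $\tfrac1n\operatorname{tr}(\widetilde{\mathcal J}_n^k)$ is $\int_0^1\langle\rho_t,x^k\rangle\,dt$, with $\rho_t$ the local (arcsine) density of states at profile value $t$. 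That this integral actually reproduces the single-edge spectral measure $\tilde\nu_{\gamma,\sigma}$ is a nontrivial identity; a proof by this route has to address it head-on, and nothing in your sketch does. The paper's spectral-measure device is precisely what sidesteps this.

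The second gap is in your step (iii). You claim a second-moment estimate summable as soon as $\beta n\to\infty$. Each diagonal entry of $\widetilde{\mathcal J}_n$ has variance $\Theta(1/(\beta n))$, the diagonal terms $\widetilde{\mathcal J}_n^k(i,i)$ are $O(k)$-dependent, so $\operatorname{Var}\bigl(\tfrac1n\operatorname{tr}(\widetilde{\mathcal J}_n^k)\bigr)=\Theta\bigl(1/(\beta_n n^2)\bigr)$. Then $\sum_n 1/(\beta_n n^2)=\sum_n 1/((\beta_n n)\,n)$, which diverges if $\beta_n n$ grows slower than $\log n$ (e.g.\ $\beta_n n=\log\log n$). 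So Chebyshev plus Borel--Cantelli at the second-moment level does not close under the stated hypothesis. The fix is to go to fourth moments: by the local dependence the fourth central moment of $\tfrac1n\operatorname{tr}(\widetilde{\mathcal J}_n^k)$ is $\Theta\bigl(1/(\beta_n^2 n^4)\bigr)$, and since $(\beta_n n)^2\to\infty$ this is $o(1/n^2)$, which is summable. (Bounded differences is a separate dead end here: the rescaled entries have range growing like $(p_1+p_2)/\sqrt{np_1}\to\infty$, so the Lipschitz constants are not $O(1/n)$.) The paper avoids both issues by needing convergence of only finitely many matrix entries, each controlled by Lemma \ref{beta}, together with the $\mu_n\leftrightarrow L_n$ transfer.
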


\begin{rmk} This limit improves the condition $\beta n\!>\!> \log n$ in \cite{MaLDPJ} to $\beta n\!>\!>1.$ When $\gamma\sigma\in (0, 1],$ the same method in the proof of Theorem \ref{main} leads that $\frac{1}{n}\sum_{i=1}^n\delta_{p_2\lambda_i/p_1}$ converges weakly to the Wachter law $\nu_{\gamma, \sigma}$ almost surely. Moreover, when $\gamma>0$ while $\sigma=0,$ 
$\frac{1}{n}\sum_{i=1}^n\delta_{p_2\lambda_i/p_1}$ converges weakly to the Marchenko-Pastur law almost surely, as proved in \cite{Nagel}. While for $\gamma=0,$ $L_n$ converges weakly to a semicircle law $\frac{1+\sigma}{2\pi}\sqrt{\frac{4}{1+\sigma}-x^2}$ almost surely while a similar result was obtained in Theorem 2.1 of \cite{Nagel} with an additional conditions \eqref{condsemi} because of different scaling.  
\end{rmk}

\begin{rmk}[\bf The relationship between the three classical laws] 
As pointed out in \cite{MaLDPJ}, under condition {\bf H}, $$\lim_{\sigma\to 0}W_2(\tilde{\nu}_{\gamma, \sigma}, \tilde{\nu}_{\gamma})=0, \quad \lim_{\sigma\to 0}W_2(\tilde{\nu}_{\gamma, \sigma}, SC(2/(1+\sigma)))=0 \quad {\rm and} \quad \lim_{\gamma\to 0}W_2(\tilde{\nu}_{\gamma}, SC(2))=0.$$ 
Here the density functions of $\tilde{\nu}_{\gamma}$ and $SC(\alpha)$ are $\sqrt{\gamma} h_{\gamma}(1+\sqrt{\gamma} x)$ and  $\frac{1}{\pi\alpha}\sqrt{2\alpha-x^2},$ respectively. 
We offer a graphical  interchange format (GIF) to show the fluctuations of the density function $\widetilde{h}_{\gamma, \sigma}$ as $\gamma$ and $\sigma$ vary. The GIF and corresponding R code  can be downloaded from 
\href{https://github.com/S1yuW/Nonstandard\_limits\_for\_beta\_Jacobi\_ensembles}{https://github.com/S1yuW/Nonstandard\_limits\_for\_beta\_Jacobi\_ensembles}.
\end{rmk}

Based on the weak limit of the empirical measure and    Gershgorin circle Theorem, the same argument as in \cite{MaLDPJ} deduces the following strong law of large numbers of the extremal eigenvalues.  
\begin{rmk}  Let $\boldsymbol{\lambda}$ be the $\beta$-Jacobi ensembles with joint density function \eqref{dfjacobi} and 
set $\lambda_{(1)}=\min_{1\le i\le n}\lambda_i$ and $\lambda_{(n)}=\max_{1\le i\le n}\lambda_i$. Under assumption {\bf H}, we have 
$$\aligned \frac{(p_1+p_2)\lambda_{(1)}-p_1}{\sqrt{np_1}}&\to\frac{(1-\sigma)\sqrt{\gamma}-2\sqrt{1+\sigma-\sigma\gamma}}{1+\sigma} \\
\frac{(p_1+p_2)\lambda_{(n)}-p_1}{\sqrt{np_1}}&\to\frac{(1-\sigma)\sqrt{\gamma}+2\sqrt{1+\sigma-\sigma\gamma}}{1+\sigma}
\endaligned  $$ 
almost surely as $n\to\infty.$ These limits are equivalent to the statements of Theorem 2 in \cite{MaLDPJ}, with weaker condition $\beta n\to\infty.$
	The other limiting behaviors under different scaling and the comparison with existing results in literature were also discussed in \cite{MaLDPJ}. Here we don't mention that again.  
	\end{rmk} 

In \cite{MaLDPJ}, they established a full large deviation for the empirical measure $L_n$ with speed $\beta n^2$ and a good rate function. Partial cases of the large deviation of spectral measures were investigated in \cite{Nagel}. Therefore, another object of this paper is to establish the large deviation for the spectral measure under condition {\bf H}. We will follow the method in \cite{Nagel} to achieve this goal. 
 
In the remainder of this paper, we are going to prove Theorem \ref{main} in the second section and the third section is devoted to the central limit theorem and large deviation of the spectral measure. 

\section{Proof of Theorem \ref{main}.} 
We first recall the definition of spectral measure (see \cite{Dette} and \cite{Trinh2}). 
\subsection{Spectral measure} 
Let $J$ be a semi-infinite Jacobi matrix, which is in fact a symmetric tridiagonal matrix of the form 
\begin{align}\label{defj}
	J=\left(\begin{array}{llll}a_{1} & b_{1} & & \\ b_{1} & a_{2} & b_{2} & \\ & \ddots & \ddots & \ddots\end{array}\right), \quad  a_{i} \in \mathbb{R}, \;  b_{i}>0.
\end{align}
For the Jacobi matrix, there exists a  probability measure $\mu$ such that $$
\left\langle\mu, x^{k}\right\rangle=\int_{\mathbb{R}} x^{k} d \mu=e_{1}^TJ^{k} e_{1}=J^k(1, 1) , \; k \in \mathbb{N},
$$
where $e_{1}=(1,0, \ldots)^{T}. $ Here, we use the notation $J(i, j)$ to represent the $(i, j)$ entry of the matrix $J.$ The probability $\mu$ determined by the equation above is unique if and only if $J$ is essentially self-adjoint in $l^2.$ A simple sufficient condition for this is $\sum\limits_{i=1}^{\infty}b_i^{-1}=\infty$ (see \cite{B. Simon}).  In the case of uniqueness, we call $\mu$ the spectral measure of $J$.

The spectral measure of a finite Jacobi matrix, a symmetric tridiagonal matrix of the form 
\begin{align}\label{defjn}
J_n=\left(\begin{array}{cccc}a_{1} & b_{1} & & \\ b_{1} & a_{2} & b_{2} & \\ & \ddots & \ddots & \ddots \\ & & b_{n-1} & a_{n}\end{array}\right), \; a_{i} \in \mathbb{R}, \; b_{i}>0
\end{align}
is defined to be the unique probability measure $\mu$ on $\mathbb{R}$ 
satisfying 
$$\left\langle\mu, x^{k}\right\rangle=e_{1}^T J_n^{k} e_{1}=J_n^{k}(1, 1) , \; k \in \mathbb{N} ,$$
where $e_{1}=(1,0, \ldots)^{T} $.

\subsection{Weak limit of spectral measure} 

The tridiagonal random matrix characterization for $\beta$-Jacobi ensembles was found by Edelman and Sutton in \cite{ES08}. We use the following construction of  Killip and Nenciu \cite{Killip}. Let $c_{1}, \ldots, c_{n}, s_1, \ldots,  s_{n-1}$  be independent random variables distributed as
\begin{equation}\label{cksk}
	\begin{cases}
c_{k} \sim \operatorname{Beta}\left( \beta^{\prime} \left(p_1-k+1  \right) , \beta^{\prime} \left(p_2-k+1 \right) \right), &1\leq k \leq n ; \\
s_{k} \sim \operatorname{Beta}\left(\beta^{\prime} \left(n-k \right)  , \beta^{\prime} \left(p_1+p_2-n-k+1  \right)  \right), &1\leq k \leq n-1;
\end{cases}
\end{equation}
with $\beta'=\beta/2$ and define
$$
\begin{aligned}
&a_{k}=s_{k-1}\left(1-c_{k-1}\right)+c_k\left(1-s_{k-1}\right), \\
&b_{k}=\sqrt{c_k(1-c_k)s_k(1-s_{k-1})},
\end{aligned}
$$
with $c_{0}=s_{0}=0$. Here $\operatorname{Beta}(x, y)$ denotes the beta distribution with parameters $x, y$. Consider the tridiagonal matrix
$$
\mathcal{J}_{n}=\left(\begin{array}{cccc}
a_{1} & b_{1} & & \\
b_{1} & a_{2} & \ddots & \\
& \ddots & \ddots & b_{n-1} \\
& & b_{n-1} & a_{n}
\end{array}\right).
$$
The eigenvalues of $\mathcal{J}_{n}$ possess the joint density function \eqref{dfjacobi} .

Let $\mu_n$ be the unique spectral measure of $$\widetilde{\mathcal{J}}_{n}:=\frac{(p_1+p_2)\mathcal{J}_n-p_1 \bf \, I_n}{\sqrt{n \, p_1}}$$ with ${\bf I}_n$ the $n\times n$ identity matrix and recall $L_n=\frac{1}{n}\sum_{i=1}^n\delta_{Y_i}$ with $(Y_i)_{1\le i\le n}$ 
being the eigenvalues of $\widetilde{\mathcal{J}}_n.$ 
It was proved in \cite{Nagel} that $\mu_n$ and $L_n$ have the same weak limit since their Kolmogorov distance goes to $0.$ 
When $\beta$ is replaced by $\beta_n,$ the condition $\beta_n\to\infty$ certainly guarantees the crucial estimate in the proof of Theorem 4.2 in \cite{Nagel}, and then Theorem \ref{main} can be obtained by the following theorem. 

\begin{thm}\label{spe} Let $\mu_n$ be the spectral measure of $\widetilde{\mathcal{J}}_n$ introduced above. Under the assumption {\bf H}, $\mu_n$ converges weakly to $\tilde{\nu}_{\gamma, \sigma}$ almost surely as $n\to\infty.$  
Furthermore, if $f$ is a continuous function of polynomial growth, then $$ \left\langle\mu_{n}, f\right\rangle \rightarrow \langle \widetilde \nu_{\gamma,\sigma}, f\rangle $$ almost surely and in $L^{q}$ for all $1 \leq q<\infty$. 
\end{thm}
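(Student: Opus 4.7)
The plan is to apply the spectral measure method to $\widetilde{\mathcal{J}}_n$ via the Killip--Nenciu tridiagonalisation. I would proceed in three stages: establish almost-sure limits for the top-left entries $\tilde{a}_k,\tilde{b}_k$ of $\widetilde{\mathcal{J}}_n$ for each fixed $k$; use this to prove moment-by-moment convergence of $\mu_n$ to the spectral measure of an explicit limit Jacobi operator; and identify that limit measure with $\widetilde{\nu}_{\gamma,\sigma}$.

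For the first stage, fix $k$ and observe that the Beta parameters of $c_k$ and $s_k$ in \eqref{cksk} are all of order $\beta n$ or larger, since $p_1,p_2\geq n$ and $\beta n\to\infty$. A Bernstein-type concentration bound for the Beta distribution combined with Borel--Cantelli then yields, almost surely, $c_k\to\sigma/(1+\sigma)$ and $s_k\to\gamma\sigma/(1+\sigma)$. Substituting into the formulas for $a_k,b_k$ and rescaling gives, almost surely,
\[ \tilde{a}_1\to 0,\quad \tilde{b}_1\to\frac{1}{\sqrt{1+\sigma}},\quad \tilde{a}_k\to \tilde{a}:=\frac{(1-\sigma)\sqrt{\gamma}}{1+\sigma},\quad \tilde{b}_k\to \tilde{b}:=\frac{\sqrt{1+\sigma-\gamma\sigma}}{1+\sigma}\ (k\geq 2). \]
The boundary values at $k=1$ differ from the bulk because $s_0=0$; this asymmetry is the source of the non-semicircular shape of the limit.

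For the second stage, for every $m\in\mathbb{N}$ the moment $\int x^m\,d\mu_n=\widetilde{\mathcal{J}}_n^m(1,1)$ is a polynomial in finitely many of the entries $\tilde{a}_1,\dots,\tilde{a}_{m+1},\tilde{b}_1,\dots,\tilde{b}_m$. By continuity and the convergence above, this moment tends almost surely to the $(1,1)$-entry of the $m$-th power of the semi-infinite Jacobi operator $J_\infty$ with the boundary-perturbed constant coefficients listed above. Since $J_\infty$ is bounded, its spectral measure $\mu_\infty$ is compactly supported and determined by its moments, so the method of moments (via Carleman's criterion) gives $\mu_n\Rightarrow\mu_\infty$ almost surely. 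I would then identify $\mu_\infty$ with $\widetilde{\nu}_{\gamma,\sigma}$ through the resolvent identity
\[ m_{\mu_\infty}(z)=\frac{1}{z-\tilde{b}_1^2\,m_{\mathrm{sc}}(z)}, \]
where $m_{\mathrm{sc}}$ is the Stieltjes transform of the semicircle on $[\tilde{a}-2\tilde{b},\tilde{a}+2\tilde{b}]$; inverting produces a density matching \eqref{dens} in all three regimes of $(\gamma,\sigma)$, with the degenerate cases $\sigma=0$ or $\gamma=0$ handled either by continuity or by a parallel calculation.

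For the polynomial-growth statement, the extremal eigenvalue limits (third remark of the paper), which follow from the entry bounds of the first stage together with Gershgorin's theorem, ensure that $\mathrm{supp}(\mu_n)$ is eventually contained in a deterministic compact set almost surely. Consequently the almost-sure weak convergence automatically promotes to $\langle\mu_n,f\rangle\to\langle\widetilde{\nu}_{\gamma,\sigma},f\rangle$ almost surely for any continuous $f$ of polynomial growth. Uniform integrability, obtained from polynomial moment bounds on the entries of $\widetilde{\mathcal{J}}_n$, then upgrades this to $L^q$ convergence for every $1\leq q<\infty$. The main obstacle is the two-scale bookkeeping in the first stage: the boundary entries at $k=1$ and the bulk entries at $k\geq 2$ converge to distinct deterministic values, and only this asymmetry produces, via the resolvent identity, the Wachter-type law $\widetilde{\nu}_{\gamma,\sigma}$ rather than the naive semicircle attached to the bulk alone.
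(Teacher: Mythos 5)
Your overall architecture is correct and matches the paper: tridiagonalise via Killip--Nenciu, prove entrywise a.s.\ convergence of the rescaled Jacobi matrix for each fixed $k$, invoke Trinh's spectral-measure machinery (\cite[Th.~2.3, 2.9]{Trinh}), and identify the limit through the Stieltjes transform of the resulting two-block-constant Jacobi operator (this is the paper's Lemma~\ref{Jacobimu}). Your observation that the $k=1$ boundary perturbation is what produces the Wachter-type law is also the right intuition.

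There are, however, two substantive gaps. First, the stage-one step ``substitute $c_k\to\sigma/(1+\sigma)$, $s_k\to\gamma\sigma/(1+\sigma)$ into the formulas and rescale'' is not a valid derivation of the rescaled entrywise limits. The diagonal entry is
\[
\widetilde{\mathcal{J}}_n(1,1)=\frac{p_1+p_2}{\sqrt{np_1}}\Bigl(c_1-\frac{p_1}{p_1+p_2}\Bigr),
\]
and the prefactor $\frac{p_1+p_2}{\sqrt{np_1}}$ stays bounded only when $\gamma>0$ \emph{and} $\sigma>0$. Condition $(\mathbf H)$ explicitly includes the degenerate cases $\gamma=0$ and $\sigma=0$ (these are exactly the semicircle and Marchenko--Pastur regimes in \eqref{dens}), and there the prefactor diverges while $c_1-\frac{p_1}{p_1+p_2}\to 0$, so LLN-level information alone gives an $\infty\cdot 0$ indeterminate form. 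The same issue arises in $\frac{p_1+p_2}{\sqrt{np_1}}s_{k-1}$ when $\gamma=0$. The paper resolves this by separating out the mean and controlling $\frac{p_1+p_2}{\sqrt{np_1}}(c_k-\mathbb{E}c_k)$ directly at the $\frac{p_1+p_2}{\sqrt{np_1}}$ scale through the moment bound in Lemma~\ref{beta}(2); one cannot get away without an estimate at that finer scale. Relatedly, a Bernstein tail bound of the form $\exp\{-C\beta' n\varepsilon^2\}$ gives a summable Borel--Cantelli series only when $\beta n\gg\log n$, which is exactly the restriction the theorem is designed to remove; the paper's Chebyshev-type moment bound with polynomial decay in $l_n$ is the device that replaces it. Saying the degenerate cases are ``handled by continuity'' does not fix this: continuity in $(\gamma,\sigma)$ of the limit density is an a posteriori consistency statement, not a proof that the random measures $\mu_n$ converge a.s.\ for a fixed degenerate parameter pair.

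Second, your route to the polynomial-growth statement is circular as written. The extremal-eigenvalue limits quoted in the paper's remark are themselves derived \emph{from} the weak limit of the empirical measure together with Gershgorin; you cannot use them as input to establish that weak limit. Moreover, Gershgorin requires control of \emph{all} the entries $\widetilde{\mathcal{J}}_n(k,k)$, $\widetilde{\mathcal{J}}_n(k,k+1)$ uniformly over $1\le k\le n$, whereas stage one only gives convergence for each fixed $k$. The paper avoids this by proving the entrywise convergence in $L^q$ as well as a.s.\ and then appealing to \cite[Lem.~2.2]{Trinh}, which upgrades moment-by-moment convergence to $\langle\mu_n,f\rangle\to\langle\widetilde\nu_{\gamma,\sigma},f\rangle$ for continuous $f$ of polynomial growth; the $L^q$ conclusion then follows from the same moment bounds rather than from a separate compact-support argument.

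Your Stieltjes-transform identification is essentially the paper's Lemma~\ref{Jacobimu} (up to a sign convention for $m(z)$), so that part is fine.
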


The strategy of spectral measure method to prove Theorem \ref{spe} (see \cite[Th 2.3 and Th 2.9]{Trinh}) is to obtain the entrywise limiting matrix $\widetilde{\mathcal{J}}_{\gamma, \sigma}$ of $\widetilde{\mathcal{J}}_n$ and $\widetilde{\mathcal{J}}_{\gamma, \sigma}$  is expected to be a deterministic Jacobi matrix, whose spectral measure is unique and can be verified as $\tilde{\nu}_{\gamma, \sigma}.$ Thereby, we have two targets: 
to obtain the matrix $\widetilde{\mathcal{J}}_{\gamma, \sigma}$ and to verify that $\tilde{\nu}_{\gamma, \sigma}$ is the unique spectral measure of $\widetilde{\mathcal{J}}_{\gamma, \sigma}$.  
For these two aims, we present two lemmas respectively.   

The first one is on $\operatorname{Beta}$ distributions. Here recall that $\operatorname{Beta}(x, y)$ denotes the beta distribution with parameters $x, y$, and $\chi_k^2$ denotes the chi-squared distribution with $k$ degrees of freedom. 
\begin{lem}\label{beta}
			Let $\left\{x_{n}\right\}$ and $\left\{y_{n}\right\}$ be two sequences of positive real numbers satisfying 
$$
\lim_{n\to\infty}x_n=\lim_{n\to\infty} y_n=+\infty \quad \text{and} \quad \lim_{n\to\infty}\frac{x_{n}}{y_{n}} =\lambda \in[0,+\infty].
$$ 
Then the following limiting behaviors hold.
\begin{itemize}
	\item[(1)]  As $n \rightarrow \infty$, $$\operatorname{Beta}\left(x_{n}, y_{n}\right) \rightarrow \frac{\lambda}{\lambda+1}\mathbf 1_{\{0 \leq \lambda <+\infty\}}+\mathbf 1_{\{\lambda =+\infty\}}  $$ almost surely and in $L_{q}$ for all $q<\infty$ provided $\sum_{n=1}^{\infty}e^{-(x_n\wedge y_n)}<\infty.$ 
	\item[(2)] As $n\to\infty,$ we have 
	$$\frac{x_n+y_n}{l_n \sqrt{x_n}}\left({\rm Beta}(x_n, y_n)-\frac{x_n}{x_n+y_n}\right)\to 0$$
	almost surely and in $L^q$ provided that there exists $k_0$ such that $\sum_{n=1}^{\infty}1/l_{n}^{k_0}$ converges.
	\item[(3)] As $n \rightarrow \infty$,
\begin{align}\label{cltbeta}
	\sqrt{\frac{(x_n+y_n)^3}{x_n y_n}} \left(\operatorname{Beta}\left(x_{n}, y_{n}\right)-\frac{x_{n}}{x_{n}+y_{n}}\right) \to \mathcal{N}\left(0, 1 \right)
\end{align}
weakly.
\end{itemize}
\end{lem}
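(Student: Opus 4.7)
My plan is to reduce every item to a statement about independent Gamma variables via the representation $B_n := X_n/(X_n+Y_n) \sim \operatorname{Beta}(x_n,y_n)$, where $X_n \sim \operatorname{Gamma}(x_n,1)$ and $Y_n \sim \operatorname{Gamma}(y_n,1)$ are independent. The algebraic identity
\[
B_n - \frac{x_n}{x_n+y_n} = \frac{y_n(X_n - x_n) - x_n(Y_n - y_n)}{(x_n+y_n)(X_n+Y_n)}
\]
will be the common workhorse for (1)--(3): it isolates the linear fluctuations of $X_n-x_n$ and $Y_n-y_n$ in the numerator, while the denominator concentrates around $(x_n+y_n)^2$.

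\textbf{Parts (1) and (2).} For (1), the Chernoff bound for Gamma gives $P(|X_n/x_n - 1|\ge\epsilon) \le 2e^{-c(\epsilon)x_n}$ for every fixed $\epsilon>0$, and similarly for $Y_n$; under the hypothesis $\sum e^{-(x_n\wedge y_n)}<\infty$, Borel--Cantelli yields $X_n/x_n\to 1$ and $Y_n/y_n\to 1$ almost surely. Dividing numerator and denominator of $B_n$ by $\max(x_n,y_n)$ and treating the cases $\lambda\in[0,\infty)$ and $\lambda=\infty$ separately then produces the stated limit; boundedness of $B_n$ in $[0,1]$ and dominated convergence upgrade the a.s.\ convergence to $L^q$ for every $q<\infty$. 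For (2), the Gamma moment estimates $\mathbb{E}|X_n-x_n|^{2k}\le C_k x_n^k$ (and the analogue for $Y_n$), substituted into the displayed identity along with concentration of $X_n+Y_n$ around $x_n+y_n$, give
\[
\mathbb{E}\bigl|B_n - \mathbb{E} B_n\bigr|^{2k} \le C_k\Bigl(\frac{x_n y_n}{(x_n+y_n)^3}\Bigr)^k,
\]
hence $\mathbb{E}\bigl|\tfrac{x_n+y_n}{l_n\sqrt{x_n}}(B_n-\mathbb{E} B_n)\bigr|^{2k}\le C_k/l_n^{2k}$. Markov plus Borel--Cantelli with $2k=k_0$ delivers the a.s.\ convergence (the hypothesis $\sum l_n^{-k_0}<\infty$ already forces $l_n\to\infty$), while letting $k$ be arbitrarily large yields $L^q$ convergence for every finite $q$.

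\textbf{Part (3).} Set $\tilde X_n := (X_n-x_n)/\sqrt{x_n}$ and $\tilde Y_n := (Y_n-y_n)/\sqrt{y_n}$. The classical Gamma CLT, via the mgf $(1-t)^{-x_n}$, yields $(\tilde X_n,\tilde Y_n)\Rightarrow(\xi_1,\xi_2)$ with $\xi_1,\xi_2$ i.i.d.\ $\mathcal N(0,1)$. Substituting into the identity and using $(X_n+Y_n)/(x_n+y_n)\to 1$ in probability together with Slutsky, one obtains
\[
\sqrt{\frac{(x_n+y_n)^3}{x_n y_n}}\Bigl(B_n - \frac{x_n}{x_n+y_n}\Bigr) = \sqrt{\frac{y_n}{x_n+y_n}}\,\tilde X_n - \sqrt{\frac{x_n}{x_n+y_n}}\,\tilde Y_n + o_P(1),
\]
a linear combination of asymptotically independent standard Gaussians whose squared coefficients sum to $1$, hence $\mathcal N(0,1)$ in the limit.

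\textbf{Main difficulty.} The most delicate point is the sharpness required in the Chernoff constant in (1) to match the rather mild summability $\sum e^{-(x_n\wedge y_n)}<\infty$; one may need to let $\epsilon=\epsilon_n$ vanish slowly enough that $c(\epsilon_n)(x_n\wedge y_n)$ still dominates $\log n$, or alternatively invoke a direct Bernstein-type tail bound for the Beta distribution. Once (1) is in place, items (2) and (3) follow routinely from the same algebraic identity through moment estimates and Slutsky-type arguments.
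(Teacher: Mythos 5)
Your approach is essentially the same as the paper's: every part is reduced to independent Gamma variables through the representation $B_n=X_n/(X_n+Y_n)$ with $X_n\sim\operatorname{Gamma}(x_n,1)$, $Y_n\sim\operatorname{Gamma}(y_n,1)$, and your displayed algebraic identity is exactly the one the paper writes (the paper uses the chi-squared notation $\chi^2_{2x_n}=2X_n$, which is the same thing). Part (3) in particular is identical: the paper defines $\xi_n=(\chi^2_{2x_n}-2x_n)/(2\sqrt{x_n})$, $\eta_n=(\chi^2_{2y_n}-2y_n)/(2\sqrt{y_n})$, rewrites the left-hand side as $\bigl(\sqrt{y_n/(x_n+y_n)}\,\xi_n-\sqrt{x_n/(x_n+y_n)}\,\eta_n\bigr)\cdot(2x_n+2y_n)/(\chi^2_{2x_n}+\chi^2_{2y_n})$, and applies Slutsky, which is your argument verbatim. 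Part (2) likewise matches: the paper asserts the central moment estimate $\mathbb E|B_n-x_n/(x_n+y_n)|^q=O(x_n^{q/2}/(x_n+y_n)^q)$ and applies Markov plus Borel--Cantelli, which is what your Gamma moment computation is designed to produce.

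The one genuine difference is in part (1). Instead of deriving concentration from the Gamma Chernoff bound, the paper simply cites the sub-Gaussian tail estimate for the Beta distribution from Nagel, $\mathbb P(|\operatorname{Beta}(x,y)-x/(x+y)|\ge\varepsilon)\le\exp\{-\tfrac{\varepsilon^2}{128}\tfrac{x^3+y^3}{xy}\}$, uses $(x^3+y^3)/(xy)\ge x\wedge y$, and applies Borel--Cantelli; the $L^q$ upgrade is then immediate from $0\le B_n\le1$ and bounded convergence. Your flagged worry about matching a Chernoff constant to the summability hypothesis is well-placed, and of your two proposed fixes the second (a direct Beta tail bound) is the one the paper actually takes; the first (sending $\varepsilon=\varepsilon_n\to0$) does not resolve the issue, since almost sure convergence requires Borel--Cantelli for every fixed small $\varepsilon$ simultaneously and shrinking $\varepsilon_n$ only weakens the exponent. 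It is worth noting, though, that even the paper's cited bound carries an $\varepsilon^2$ factor in the exponent, so the implication ``$\sum e^{-(x_n\wedge y_n)}<\infty$ gives summable tail probabilities for all $\varepsilon>0$'' is not literal for small $\varepsilon$; the paper treats this step as immediate, which is harmless in its applications (where $x_n\wedge y_n$ grows fast enough) but is the same latent subtlety you noticed.
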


\begin{proof}
	It was proved in \cite{Nagel} that  
	$$\mathbb{P}\left(\left|{\rm Beta}(x_n, y_n)-\frac{x_n}{x_n+y_n}\right|\ge\varepsilon\right)\le \exp\left\{-\frac{\varepsilon^2}{128}\frac{x_n^3+y_n^3}{x_n y_n}\right\}.  $$
 By condition, we know 
$$\sum_{n=1}^{\infty}\mathbb{P}\left(\left|{\rm Beta}(x_n, y_n)-\frac{x_n}{x_n+y_n}\right|\ge\varepsilon\right)<\infty.$$ 
Hence Borel-Cantelli Lemma guarantees $$
  {\rm Beta}\left(x_{n}, y_{n}\right)-\frac{x_n}{x_n+y_n}\to 0 
$$ almost surely as $n\to\infty.$ This is equivalent to say 
$${\rm Beta}\left(x_{n}, y_{n}\right)\to \frac{\lambda}{\lambda+1}\mathbf 1_{\{0 \leq \lambda <+\infty\}}+\mathbf 1_{\{\lambda =+\infty\}}  $$ 
almost surely as $n\to\infty.$  This implies that convergence in probability holds. Therefore, the convergence in $L_{q}$ is clear because the $q$-th moment of ${\rm Beta}\left(x_{n}, y_{n}\right)$ are bounded by 1.

Now we prove the second item. Observe that 
$$\mathbb{E}\left(\frac{(x_n+y_n)^q}{l_n^q x_n^{q/2}}\left|{\rm Beta}(x_n, y_n)-\frac{x_n}{x_n+y_n}\right|^q\right)=O\left(l_n^{-q}\right)$$
for $q\geq 1.$
Therefore, $$\aligned 
\mathbb{P}\left(\frac{x_n+y_n}{l_n\sqrt{ x_n}}\left|{\rm Beta}(x_n, y_n)-\frac{x_n}{x_n+y_n}\right|\geq \varepsilon\right)&\le \frac{(x_n+y_n)^q}{\varepsilon^q x_n^{q/2} l_n^q}\mathbb{E}\left|{\rm Beta}(x_n, y_n)-\frac{x_n}{x_n+y_n}\right|^q\\
&=O\left(l_n^{-q}\right).
\endaligned $$
By condition on $l_n$,  
$$\sum_{n=1}^{\infty}\mathbb{P}\left(\frac{x_n+y_n}{l_n \sqrt{x_n}}\left|{\rm Beta}(x_n, y_n)-\frac{x_n}{x_n+y_n}\right|\geq \varepsilon\right)<\infty.$$ Borel-Cantelli Lemma again ensures the almost surely convergence. The convergence in $L^q$ holds since the expectation 
$$\mathbb{E}\left(\frac{(x_n+y_n)^q}{l_n^q x_n^{q/2}}\left|{\rm Beta}(x_n, y_n)-\frac{x_n}{x_n+y_n}\right|^q\right)=O\left(l_n^{-q}\right)$$
tending to zero as $n\to\infty.$ 
At last, we prove the fourth item. 
 Define 
$$ \xi_n = \frac{\chi_{2x_n}^{2}}{2\sqrt{x_n}}- \sqrt{x_n}    \quad  \text{ and } \quad \eta_n = \frac{\chi_{2y_n}^{2}}{2\sqrt{y_n}}- \sqrt{y_n}. $$ 
It is known that 
both $\xi_n$ and $\eta_n$ converges weakly to $\mathcal{N}(0, 1)$ as $n\to\infty.$
Then, \begin{align*}
	\operatorname{Beta}\left(x_{n}, y_{n}\right)-\frac{x_{n}}{x_{n}+y_{n}} \stackrel{\rm d}{=} & \frac{\chi_{2 x_{n}}^{2}}{\chi_{2 x_{n}}^{2}+\chi_{2 y_{n}}^{2}}-\frac{x_{n}}{x_{n}+y_{n}} \\
= &\frac{2 \sqrt{x_n}\, y_n \,\xi_k - 2 \sqrt{y_n}\, x_n \, \eta_n }{\left(\chi_{2 x_{n}}^{2}+\chi_{2 y_{n}}^{2}\right)\left(x_{n}+y_{n}\right)}.
\end{align*}
We write the left hand side ({\rm LHS}) of  \eqref{cltbeta} as
$$\text{LHS} = \left(\sqrt{\frac{y_n}{x_n+y_n}}\xi_n - \sqrt{\frac{x_n}{x_n+y_n}}\eta_n \right) \left( \frac{2x_n+2y_n}{\chi_{2 x_{n}}^{2}+\chi_{2 y_{n}}^{2}}  \right),$$ 
which converges weakly to $\mathcal{N}(0, 1)$ as $n\to\infty$ because the factor $ (2x_n+2y_n)/(\chi_{2 x_{n}}^{2}+\chi_{2 y_{n}}^{2})  $ converges almost surely to $1$ and $\xi_n$ and $\eta_n$ are independent and both $\xi_n$ and $\eta_n$ converge weakly to $\mathcal{N}(0, 1)$.  
\end{proof}

The second lemma is on the spectral measure of a deterministic Jacobi matrix. 

\begin{lem}\label{Jacobimu}
	 Let $ \alpha_0, \alpha_1, \beta_0 ,\beta_1$ be real parameters. 
	 Consider the following tridiagonal matrix $J$ with entries satisfying  $$J(1, 1)=\alpha_0, \; J(1, 2)=\beta_0, \; J(k, k)=\alpha_1, \; J(k, k+1)=\beta_1$$ for any $k\geq 2.$ The spectral measure $\mu$ of $J$ has the probability density function  $$ h(x)=\frac{1}{\pi} \frac{\beta_0^2 \sqrt{ 4 \beta_{1}^{2}-\left(x-\alpha_{1}\right)^{2} }}{2 \beta_{0}^{4}+2 \beta_{1}^{2}\left(x-\alpha_{0}\right)^{2}+2 \beta_{0}^{2}\left(x-\alpha_{0}\right)\left(\alpha_{1}-x\right)}.$$ 
	 In particular, by taking $$\alpha_0=0, \; \beta_0=\frac{1}{\sqrt{1+\sigma}}, \;  \alpha_1=\frac{\sqrt{\gamma}(1-\sigma)}{1+\sigma}, \; \beta_1=\frac{\sqrt{1+\sigma-\sigma \gamma}}{1+\sigma}, $$ the corresponding spectral measure $\mu$ is nothing but $\tilde{\nu}_{\gamma, \sigma}.$
\end{lem}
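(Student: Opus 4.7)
The plan is to identify the spectral measure of $J$ through its Stieltjes transform $m(z):=\int(x-z)^{-1}\,d\mu(x)$ on the upper half-plane and to recover the density via the inversion formula $h(x)=\pi^{-1}\lim_{\varepsilon\downarrow 0}\operatorname{Im} m(x+i\varepsilon)$. Existence and uniqueness of $\mu$ are immediate from the sufficient criterion recalled in the excerpt: since $b_k=\beta_1>0$ for every $k\ge 2$ one has $\sum_k b_k^{-1}=+\infty$, so $J$ is essentially self-adjoint on $\ell^2$.

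To compute $m$, I would peel off the first row and column. Writing $J^{(1)}$ for the truncated matrix and $m_1$ for its Stieltjes transform, a standard Schur-complement identity yields
\begin{equation*}
    m(z)=\frac{-1}{(z-\alpha_0)+\beta_0^2\,m_1(z)}.
\end{equation*}
The key observation is that $J^{(1)}$ has \emph{constant} Jacobi coefficients $\alpha_1,\beta_1$, so removing one more row and column returns the same operator. Self-similarity therefore forces $m_1$ to solve $\beta_1^2 m_1(z)^2+(z-\alpha_1)m_1(z)+1=0$, whose correct Herglotz branch (picked out by $m_1(z)\to 0$ at infinity and $\operatorname{Im} m_1(z)>0$ for $\operatorname{Im} z>0$) is
\begin{equation*}
    m_1(z)=\frac{-(z-\alpha_1)+\sqrt{(z-\alpha_1)^2-4\beta_1^2}}{2\beta_1^2}.
\end{equation*}
Taking boundary values on $[\alpha_1-2\beta_1,\alpha_1+2\beta_1]$ and plugging back into the expression for $m$ produces, after a short algebraic simplification in which the $(x-\alpha_1)^2$ terms in the modulus-squared of the denominator cancel between real and imaginary contributions, precisely the density $h$ stated in the lemma.

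For the second assertion I would substitute the prescribed values $\alpha_0=0$, $\beta_0=1/\sqrt{1+\sigma}$, $\alpha_1=\sqrt{\gamma}(1-\sigma)/(1+\sigma)$, $\beta_1=\sqrt{1+\sigma-\sigma\gamma}/(1+\sigma)$ into $h$ and match against each piece of \eqref{dens}. The case $\gamma=0$ is transparent: then $\alpha_1=0$ and $\beta_0=\beta_1$, so $J$ is a constant Jacobi operator and $h$ reduces at once to the rescaled semicircle law $\tfrac{1+\sigma}{2\pi}\sqrt{4/(1+\sigma)-x^2}$. When $\sigma=0$ one has $\beta_0=\beta_1=1$ and the affine substitution $y=1+\sqrt{\gamma}x$ collapses the quadratic denominator to a multiple of $y$, producing $\sqrt{\gamma}h_\gamma(y)$ as required. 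In the generic case $\sigma\gamma\in(0,1]$ the substitution $y=(\sigma/(1+\sigma))(\sqrt{\gamma}x+1)$ should turn the denominator into a constant multiple of $y(1-y)$ and send $[\alpha_1-2\beta_1,\alpha_1+2\beta_1]$ onto $[u_1,u_2]$, matching \eqref{hgam} up to the Jacobian. The main obstacle is exactly this last verification: the algebra is elementary but the bookkeeping with many factors of $1+\sigma$ and $\sqrt{\gamma}$ is error-prone. A clean way to organize it is to first confirm the endpoint identification (a single quadratic identity relating $\alpha_1\pm 2\beta_1$ to $u_1,u_2$), which then pins down both roots of the post-substitution denominator; the remaining overall constant is determined by comparing the two densities at one interior point such as $x=0$.
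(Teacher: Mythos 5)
Your proposal is correct and follows essentially the same route as the paper: compute the $m$-function via the coefficient-stripping recursion, exploit the self-similarity of the truncated operator to reduce to the quadratic $\beta_1^2 m_1^2+(z-\alpha_1)m_1+1=0$, and recover $h$ by Stieltjes inversion, then substitute the given parameter values. The only cosmetic difference is your choice of $\sum b_k^{-1}=\infty$ rather than boundedness of the coefficients to justify uniqueness of $\mu$, and your explicit case-split $\gamma=0$, $\sigma=0$, $\sigma\gamma\in(0,1]$ for the final matching, which the paper leaves implicit.
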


\begin{proof}
Since $J$ is a Jacobi matrix with bounded coefficients, its corresponding spectral measure $\mu$ is unique and has compact support. Let $S_{\mu}$ be the Stieltjes transform of $\mu,$
$$S_{\mu}(z)=\int \frac{1}{x-z}d\mu(x).$$ In the theory of Jacobi matrices, the Stieltjes transform $S_{\mu}$ is called an $m$-function, 
$$S_{\mu}(z)=m(z)=\langle (J-z)^{-1} e_1, e_1\rangle =(J-z)^{-1}(1, 1).$$ 
Let $J_{k}$ be a Jacobi matrix obtained by removing the top k rows and left-most k columns of $J$ and let $$m_{k}(z) = (J_k-z)^{-1}(1,1) .$$ 
The following relations hold (see \cite{B. Simon}, Theorem 3.2.4)
$$
-\frac{1}{m(z)}=z-\alpha_{0}+\beta_{0}^{2} m_{1}(z)
$$
and 
$$
-\frac{1}{m_1(z)}=z-\alpha_{1}+\beta_{1}^{2} m_{2}(z).
$$
Observing $m_{1}(z)=m_2(z),$ we know $m_1(z)$ satisfies the following equation
$$
\beta_1^{2} m_1(z)^{2}+(z-\alpha_1 ) m_1(z)+1=0.
$$
Therefore, resolving the two equations, we have $$ m(z) =- \frac{2\beta_1^2(z-\alpha_0) - \beta_{0}^{2}\left(z-\alpha_{1}\right)-\beta_{0}^{2} \sqrt{\left(z-\alpha_{1}\right)^{2}-4 \beta_{1}^{2}} }{2\beta_0^4 +2\beta_1^2(z-\alpha_0)^2 +2 \beta_0^2(z-\alpha_0 )(\alpha_1-z )}. $$
When the limit $\lim\limits_{\varepsilon \searrow 0} \operatorname{Im} m(x+i \varepsilon)$ exists and is finite for all $x$ belonging to its natural domain, the spectral measure $\mu$ has density function  (\cite{AnG} , Theorem 2.4.3)
$$
p(x)=\lim _{\varepsilon \searrow 0} \frac{1}{\pi} \operatorname{Im} m(x+i \varepsilon).
$$
Thus, $$ p(x) = \frac{1}{2\pi} \frac{\beta_0^2 \sqrt{ 4 \beta_{1}^{2}-\left(x-\alpha_{1}\right)^{2} }}{ \beta_{0}^{4}+ \beta_{1}^{2}\left(x-\alpha_{0}\right)^{2}+ \beta_{0}^{2}\left(x-\alpha_{0}\right)\left(\alpha_{1}-x\right)}. $$ 
When $$\alpha_0=0, \; \beta_0=\frac{1}{\sqrt{1+\sigma}}, \;  \alpha_1=\frac{\sqrt{\gamma}(1-\sigma)}{1+\sigma}, \; \beta_1=\frac{\sqrt{1+\sigma-\sigma \gamma}}{1+\sigma}, $$
one gets 
$$p(x)=\frac{\sigma\sqrt{\gamma}}{1+\sigma}h_{\gamma, \sigma}\left(\frac{\sigma}{1+\sigma}(\sqrt{\gamma} x+1)\right)=\tilde{h}_{\gamma, \sigma}(x).$$ The proof is then completed. 
\end{proof}

\begin{rmk}\label{Rbeta} It has been proved in \cite{DuEd} that the Wigner semicircle law, Marchenko-Pastur law and the Wachter law correspond to different $\alpha_0, \alpha_1$ and $\beta_0, \beta_1,$ respectively. Here, we give a different proof of Lemma \ref{beta}.  	
\end{rmk}

Now we can present the proof of Theorem \ref{spe}.
\begin{proof}[\bf Proof of Theorem \ref{spe}]
Keep in mind that our aim is to prove that $\mu_n$ converges weakly to $\tilde{\nu}_{\gamma, \sigma}$ almost surely and   
furthermore 
\begin{align}\label{munpcon}
	 \left\langle\mu_{n}, f \right\rangle \rightarrow \langle \widetilde \nu_{\gamma,\sigma}, f \rangle 
\end{align}
almost surely and in $L^{q}$ for all $1 \leq q<\infty$ as $n\to\infty,$ where $f$ is a continuous function with polynomial growth.   

Recall the Jacobi matrix $\widetilde{\mathcal{J}}_{\gamma, \sigma}$ presented in Lemma \ref{Jacobimu} as  
\begin{equation}\label{Jform} \widetilde{\mathcal{J}}_{\gamma,\sigma}=\left(\begin{array}{ccccc}0 & \frac{1}{\sqrt{(1+\sigma)}} & & & \\ \frac{1}{\sqrt{(1+\sigma)}}  & \frac{\sqrt{\gamma}(1-\sigma)}{1+\sigma} &  \sqrt{\frac{1+\sigma-\gamma \sigma}{(1+\sigma)^{2}}}  & & \\ &  \sqrt{\frac{1+\sigma-\gamma \sigma}{(1+\sigma)^{2}}}  & \frac{\sqrt{\gamma}(1-\sigma)}{1+\sigma} &  \sqrt{\frac{1+\sigma-\gamma \sigma}{(1+\sigma)^{2}}}  & \\ & & \ddots & \ddots & \ddots\end{array}\right), \end{equation} 
whose spectral measure is nothing but $\widetilde{\nu}_{\gamma, \sigma}.$ 

By the core of spectral method (see \cite[Th 2.3 and Th 2.9]{Trinh}), once we verify that  
the entrywise limiting matrix of $\widetilde{\mathcal{J}}_{n}$ is $\widetilde{\mathcal{J}}_{\gamma, \sigma},$ in the sense of almost sure convergence,  
  we can conclude that $\mu_n$ converges almost surely to $\widetilde{\nu}_{\gamma, \sigma}$ as $n\to\infty.$ 
 Meanwhile, if the entrywise  limit above also holds with respect to $L^q$ $(1\le q),$ then  the limit \eqref{munpcon} is true with $f$ being a polynomial function. 
\cite[Lem 2.2]{Trinh} helps to get the convergence \eqref{munpcon} when $f$ is a continuous function $f$ of polynomial growth. 

Thereby, it remains to find out the entrywise limit of  
$\widetilde{J}_n$ is $\widetilde{J}_{\gamma, \sigma}$ as $n\to\infty,$ with respect to both the almost sure convergence  and the convergence in $L^q$$(1\le q<\infty).$
 
Recalling the definition of the random variables $c_k, s_k$ in \eqref{cksk} \begin{equation*}
	\begin{cases}
c_{k} \sim \operatorname{Beta}\left( \beta^{\prime} \left(p_1-k+1  \right) , \beta^{\prime} \left(p_2-k+1 \right) \right), &1\leq k \leq n ; \\
s_{k} \sim \operatorname{Beta}\left(\beta^{\prime} \left(n-k \right)  , \beta^{\prime} \left(p_1+p_2-n-k+1  \right)  \right), &1\leq k \leq n-1;
\end{cases}
\end{equation*}
and define
$$
\begin{aligned}
&a_{k}=s_{k-1}\left(1-c_{k-1}\right)+c_k\left(1-s_{k-1}\right), \\
               \end{aligned}
$$
with $c_{0}=s_{0}=0$.
The diagonal entries of the rescaled matrix $\widetilde{\mathcal{J}}_{n}$ are
\begin{equation}\label{jnak}
   \widetilde{\mathcal{J}}_{n}(k, k)  = \frac{p_1+p_2}{\sqrt{np_1}} s_{k-1}\left(1-c_{k-1}-c_k\right)+ \frac{p_1+p_2}{\sqrt{np_1}}c_k -\frac{p_1}{\sqrt{np_1}} 
\end{equation}
and the non negative sub-diagonal entries of $\widetilde{\mathcal{J}}_{n}$ satisfying 
\begin{equation}\label{jnbk}
\widetilde{\mathcal{J}}^2_{n}(k, k+1)  =  \frac{(p_1+p_2)^2}{n p_1}c_k(1-c_k)s_k(1-s_{k-1}). \end{equation}
Since both $s_k$ and $c_k$ satisfy the conditions in Lemma \ref{beta}, it follows from Lemma \ref{beta} and the condition {\bf H} that 
$$\aligned \frac{p_1+p_2}{\sqrt{np_1}}\left(c_k-\frac{p_1}{p_1+p_2}\right)&=\frac{p_1+p_2}{\sqrt{np_1}}\left(c_k-\frac{p_1-k+1}{p_1+p_2-2k+2}\right)-\frac{(p_2-p_1)(k-1)}{(p_1+p_2-2k+2)\sqrt{np_1}}
 \endaligned $$
 converges to $0$ almost surely for any $k\geq 1,$  and 
 $$\aligned \frac{p_1+p_2}{\sqrt{np_1}} s_k=\frac{p_1+p_2}{\sqrt{np_1}} \left(s_k-\frac{n-k}{p_1+p_2-2k+1}\right)+\frac{(p_1+p_2)(n-k)}{\sqrt{np_1}(p_1+p_2-2k+1)}
 \endaligned $$ converges to $\sqrt{\gamma}$
almost surely for any $k\geq 1$ as $n\to\infty,$ 
and similarly
$$\aligned \frac{(p_1+p_2)^2}{np_1} s_k c_k\to 1
 \endaligned $$
 almost surely as $n\to\infty$ for any $k\geq 1.$
 And also, 
$$c_k\to \frac{\sigma}{1+\sigma} \quad \text{and} \quad  s_k\to \frac{\sigma\gamma}{1+\sigma}$$ 
almost surely for any $k\geq 1$ as $n\to\infty.$ 
Putting these limits into \eqref{jnak}, we know 
$$\widetilde{\mathcal{J}}(1, 1)\to 0 \quad \text{and}\quad \widetilde{\mathcal{J}}_{n}(k,k)\to \frac{1-\sigma}{1+\sigma} \sqrt{\gamma}$$ almost surely as $n\to\infty$ for any $k\geq 2.$ 
 Meanwhile, plugging corresponding limits into \eqref{jnbk}, we know 
$$\widetilde{\mathcal{J}}^2(1, 2)\to\frac{1}{1+\sigma} \quad \text{and}\quad \widetilde{\mathcal{J}}^2(k, k+1)\to \frac{1+\sigma-\sigma\gamma}{(1+\sigma)^2}$$ for any $k\geq 2$ almost surely as $n\to\infty.$ 
All the convergence above also hold in $L^q$ for $q\geq 1.$
Therefore, the limiting matrix 
is truly the desired one $\widetilde{\mathcal{J}}_{\gamma, \sigma}.$ 

The proof is then completed. 
\end{proof}
\section{Central limit theorem and large deviation for \texorpdfstring{$\mu_n$}{}}
In this section, we offer two limiting behaviors of the spectral measure $\mu_n$ of $\widetilde{\mathcal{J}}_n:$ central limit theorem and large deviation.   
\subsection{Central limit theorem} 
\begin{thm}\label{muclt}
Let $\mu_n$ be the spectral measure of $\widetilde{\mathcal{J}}_{n}$ as in Theorem \ref{spe}. Under the assumption {\bf H},  for each polynomial $p$, we have 
\begin{align*}
	\sqrt{\beta^{\prime}n}\left(\left\langle\mu_{n}, p \right\rangle-\mathbb E \left\langle\mu_{n}, p \right\rangle  \right) {\rightarrow} \mathcal{N}\left(0, \sigma_{p}^{2}\right)
\end{align*}
weakly as $n\to\infty,$ where $\sigma_{p}^{2}$ is a constant.
\end{thm}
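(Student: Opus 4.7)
The plan is to realize $\langle \mu_n, p\rangle$ as a polynomial in finitely many independent Beta random variables and then push the Beta CLT of Lemma \ref{beta}(3) through that polynomial via a multivariate delta method.

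First, I reduce to finitely many coordinates. By the spectral identity $\langle \mu_n, x^k\rangle = \widetilde{\mathcal{J}}_n^k(1,1)$ and the tridiagonality of $\widetilde{\mathcal{J}}_n$, only walks of length $k$ starting and ending at vertex $1$ contribute; hence $\langle \mu_n, x^k\rangle$ is a fixed polynomial in $a_1,\dots,a_K$ and $b_1^2,\dots,b_{K-1}^2$ for some $K=K(k)$ independent of $n$. Combining with \eqref{jnak}--\eqref{jnbk}, this is in turn a fixed polynomial $F_k$ in the independent Beta variables $c_1,s_1,\dots,c_K,s_{K-1}$ from \eqref{cksk}. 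Taking $K$ large enough to accommodate $\deg p$ and writing $X_n=(c_1,s_1,\dots,c_K,s_{K-1})$, one has $\langle \mu_n,p\rangle = F_p(X_n)$ for a fixed polynomial $F_p$ depending only on $p$.

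Next, I establish a joint CLT for $X_n$. Applying Lemma \ref{beta}(3) coordinatewise and using the independence of the Beta variables in \eqref{cksk}, the Cram\'er--Wold device gives
\[
\sqrt{\beta' n}\bigl(X_n - \mathbb{E} X_n\bigr) \to \mathcal{N}(0,\Sigma)
\]
weakly, where $\Sigma$ is diagonal with entries depending only on $\gamma$ and $\sigma$: for each fixed coordinate the ratio of the scaling $\sqrt{(x_n+y_n)^3/(x_ny_n)}$ in Lemma \ref{beta}(3) to $\sqrt{\beta' n}$ tends to a finite positive constant under $\mathbf{H}$ after substituting the appropriate $x_n,y_n$ for $c_k$ or $s_k$ (so e.g.\ $c_k$ contributes asymptotic variance $\gamma\sigma^2/(1+\sigma)^3$). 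Now I apply the delta method: since each coordinate of $X_n$ lies in $[0,1]$ with variance $O(1/(\beta' n))$ by the Beta variance formula, and $F_p$ is a polynomial, a second-order Taylor expansion around $\mathbb{E} X_n$ yields $|F_p(X_n) - F_p(\mathbb{E}X_n) - \langle \nabla F_p(\mathbb{E}X_n), X_n - \mathbb{E}X_n\rangle| \le C\|X_n-\mathbb{E}X_n\|^2$ and $|\mathbb{E}F_p(X_n) - F_p(\mathbb{E}X_n)| = O(1/(\beta' n))$. Multiplying by $\sqrt{\beta' n}$, both remainders are $o_P(1)$, and hence
\[
\sqrt{\beta'n}\bigl(\langle\mu_n,p\rangle - \mathbb{E}\langle\mu_n,p\rangle\bigr) = \bigl\langle\nabla F_p(\mathbb{E}X_n),\, \sqrt{\beta'n}(X_n - \mathbb{E}X_n)\bigr\rangle + o_P(1),
\]
which converges weakly to $\mathcal{N}(0,\sigma_p^2)$ with $\sigma_p^2 = \nabla F_p(x_\infty)^\top \Sigma\, \nabla F_p(x_\infty)$ and $x_\infty := \lim \mathbb{E}X_n$ (which exists by Lemma \ref{beta}(1)).

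The main obstacles are essentially bookkeeping. One is to track the index shift in \eqref{jnak}, where $a_k$ mixes the triple $(c_{k-1},c_k,s_{k-1})$, so that the minimal $K$ in the reduction step is slightly larger than $\deg p$ but still finite. The other is to verify that the diagonal entries of $\Sigma$ stay finite in the degenerate corners of the parameter region $\gamma=0$, $\sigma=0$, or $\sigma\gamma\nearrow 1$; under $\mathbf{H}$ the governing ratios $np_1p_2/(p_1+p_2)^3$ and $n^2(p_1+p_2-n)/(p_1+p_2)^3$ remain bounded, so $\sigma_p^2$ is a well-defined non-negative constant in all cases covered by the hypothesis.
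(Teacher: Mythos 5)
Your plan follows the same high-level blueprint as the paper --- reduce $\langle\mu_n,p\rangle$ to a function of finitely many independent Beta variables, apply the Beta CLT of Lemma~\ref{beta}(3) jointly via Cram\'er--Wold, and finish with a delta method --- but you apply the linearization at the wrong level of scaling, and this leaves a genuine gap precisely in the degenerate corners $\gamma=0$ and $\sigma=0$ that hypothesis {\bf H} permits. Your reduction asserts that $\langle\mu_n,p\rangle=F_p(X_n)$ for ``a fixed polynomial $F_p$ depending only on $p$.'' This is false: the walk polynomial in the matrix entries $\bigl(\widetilde{\mathcal{J}}_n(i,i),\widetilde{\mathcal{J}}_n^2(i,i+1)\bigr)$ is $n$-independent, but once you substitute \eqref{jnak}--\eqref{jnbk} the resulting polynomial in $(c_k,s_k)$ carries the $n$-dependent prefactors $(p_1+p_2)/\sqrt{np_1}$ and $(p_1+p_2)^2/(np_1)$, which under {\bf H} converge to $(1+\sigma)/(\sigma\sqrt{\gamma})$ and $(1+\sigma)^2/(\gamma\sigma^2)$ and hence diverge whenever $\gamma=0$ or $\sigma=0$. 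Your Taylor bound $|F_p(X_n)-F_p(\mathbb{E}X_n)-\langle\nabla F_p(\mathbb{E}X_n),X_n-\mathbb{E}X_n\rangle|\le C\|X_n-\mathbb{E}X_n\|^2$ with an $n$-independent $C$ is therefore unjustified (the Hessian of $F_{p,n}$ on $[0,1]^{2K-1}$ is unbounded in $n$), and the proposed $\sigma_p^2=\nabla F_p(x_\infty)^T\Sigma\,\nabla F_p(x_\infty)$ is an indeterminate $\infty\cdot 0$: the diagonal of $\Sigma$ vanishes exactly where the gradient diverges. Your final paragraph guards against $\Sigma$ being infinite, but that is not the danger --- it is $\nabla F_p(x_\infty)$ that fails to exist.

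The paper avoids this by running the delta method one level higher. It writes $\langle\mu_n,p\rangle=f\bigl((\widetilde{\mathcal{J}}_n(i,i),\widetilde{\mathcal{J}}_n(i,i+1))_{i\le m}\bigr)$ with $f$ a genuinely fixed polynomial, and proves the joint CLT directly for the rescaled entry fluctuations $\tilde a_k^{(n)}=\sqrt{\beta'n}(\widetilde{\mathcal{J}}_n(k,k)-\mathbb{E}\widetilde{\mathcal{J}}_n(k,k))$, $\tilde b_k^{(n)}$, i.e.\ for $\sqrt{\beta'n}(x_{k,j}-\mathbb{E}x_{k,j})$ with $x_{k,1}=\frac{p_1+p_2}{\sqrt{np_1}}s_k$ and $x_{k,2}=\frac{p_1+p_2}{\sqrt{np_1}}c_k$. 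The divergent prefactor is absorbed into the random variable before the limit is taken, so the combined quantity $\sqrt{\beta'n}\,\frac{p_1+p_2}{\sqrt{np_1}}\,\sqrt{(n-k+1)(p_1+p_2-n-k+2)/(\beta'(p_1+p_2-2k+3)^3)}\to\sqrt{\gamma}\sqrt{1-\sigma\gamma/(1+\sigma)}$ stays finite in every case covered by {\bf H}. It then invokes \cite[Theorem 2.9]{Trinh} with uniform fourth-moment bounds, and separately shows $\sqrt{\beta'n}\bigl(f(\mathbb{E}\widetilde{\mathcal{J}}_n(\cdot))-\mathbb{E}f(\widetilde{\mathcal{J}}_n(\cdot))\bigr)\to 0$. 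Your argument can be repaired by performing the linearization in these rescaled entry variables, or by keeping $F_{p,n}$ explicitly $n$-dependent and tracking the prefactor-versus-variance cancellation through every term of the expansion; as written, however, the degenerate parameter values are not handled.
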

\begin{proof}

	Let $p$ be a polynomial of degree $m>0$. Then there is a polynomial of variables at most $2 m$ such that 
$$
\left\langle\mu_{n}, p\right\rangle=p\left(\widetilde{\mathcal{J}}_n \right)(1,1)=f\left(\left( \widetilde{\mathcal{J}}_n(i, i) , \widetilde{\mathcal{J}}_n(i, i+1) \right)_{ 1\leq i \leq m } \right).
$$
We first show that
$$
\sqrt{\beta^{\prime}n}\left(\left\langle\mu_{n}, p \right\rangle- f\left(\left(  \mathbb{E}\widetilde{\mathcal{J}}_{n}(i, i) , \mathbb{E}\widetilde{\mathcal{J}}_{n}(i, i+1) \right)_{ 1\leq i \leq m }  \right) \right) {\rightarrow} \mathcal{N}\left(0, \sigma_{p}^{2}\right).
$$

Set
\begin{align*}
	& \tilde{a}_k^{(n)}:=\sqrt{\beta' n}(\widetilde{\mathcal{J}}_n(k, k)-\mathbb{E}\widetilde{\mathcal{J}}_{n}(k, k)), \\
	&\tilde{b}_k^{(n)}:=\sqrt{\beta' n}(\widetilde{\mathcal{J}}_n(k, k+1)-\mathbb{E}\widetilde{\mathcal{J}}_{n}(k, k+1)).
\end{align*}
As Theorem 2.9. in \cite{Trinh}, 
it is enough to verify the following conditions: 
\begin{enumerate}
\item $\widetilde{\mathcal{J}}_n(k, k)\to \bar{a}_k, \quad \widetilde{\mathcal{J}}_n(k, k+1)\to \bar{b}_k$ almost surely as $n\to\infty.$ 
\item the two sequence of random variables  
 $$\tilde{a}_k^{(n)} \to \eta_k, \quad \tilde{b}_k^{(n)}\to \zeta_k$$ weakly as $n\to\infty$ and the joint weak convergence also holds. Equivalently, for any two sequences $\{c_k\}_{k\geq 1}$ and $\{d_k\}_{k\geq 1},$
 and $m\geq 1,$ 
 it holds that  \begin{align*}
\sum_{k=1}^m \left(c_{i_k} \tilde{a}_{i_k}^{(n)}+d_{i_k} \tilde{b}_{i_k}^{(n)}\right) \longrightarrow \sum_{k=1}^m\left(c_{i_k} \eta_{i_k}+d_{i_k} \zeta_{i_k}\right)
\end{align*}
weakly 
as $n\to\infty$ for all finites subsets $\left(i_1, \ldots, i_m \right)$ of $ \mathbb Z_+^{m}.$
 \item all moments of $\widetilde{\mathcal{J}}_n(k, k)$ and $\widetilde{\mathcal{J}}_n(k, k+1)$ are finite and the convergence in the first item holds in $L^q$ for $q\geq 1.$ 
 \item $\mathbb{E}\eta_k=\mathbb{E}\zeta_k=0,$ and for some $\delta>0,$ 
  $$\sup_{n}\mathbb{E}[|\tilde{a}_k^{(n)}|^{4}]<\infty, \quad \sup_{n}\mathbb{E}[|\tilde{b}_k^{(n)}|^{4}]<\infty.$$ 
\end{enumerate}

The first and the third items are satisfied as in the proof of Theorem \ref{spe} with $$\bar{a}_1=0, \; \bar{b}_1=\frac{1}{\sqrt{1+\sigma}}, \; \bar{a}_k=\frac{\sqrt{\gamma}(1-\sigma)}{1+\sigma},  \; \bar{b}_{k}=\frac{\sqrt{1+\sigma-\sigma\gamma}}{1+\sigma}$$ for $k\geq 2.$ 
Now we verify the second and the fourth items. We need to check  
\begin{equation}\label{limclt1}\sqrt{\beta' n}(\widetilde{\mathcal{J}}_n(k, k)-\mathbb{E}\widetilde{\mathcal{J}}_{n}(k, k)) \xrightarrow{\rm weakly}{} \mathcal{N}(0, \sigma_{1}^2)\end{equation} 
and 
\begin{equation}\label{limclt2}\sqrt{\beta' n}(\widetilde{\mathcal{J}}_n(k, k+1)-\mathbb{E}\widetilde{\mathcal{J}}_{n}(k, k+1)) \xrightarrow{\rm weakly}{} \mathcal{N}(0, \sigma_{2}^2)\end{equation} as $n\to\infty$ for any $k\geq 2.$ 
Recall for $k\geq 2,$ 
$$\aligned \widetilde{\mathcal{J}}_{n}(k, k) & = \frac{p_1+p_2}{\sqrt{np_1}}s_{k-1}\times\left(1-c_{k-1}-c_k\right)+\frac{p_1+p_2}{\sqrt{np_1}}\left(c_k-\frac{p_1}{p_1+p_2}\right). \endaligned $$ 
Let $x_{k, 1} = \frac{p_1+p_2}{\sqrt{np_1}}s_{k} $ and $x_{k, 2} = \frac{p_1+p_2}{\sqrt{np_1}} c_{k}.$
Hence 
\begin{align}
	\widetilde{\mathcal{J}}_{n}(k, k)-\mathbb{E}\widetilde{\mathcal{J}}_{n}(k, k)
= & (x_{k-1, 1}-\mathbb{E} x_{k-1, 1})\left(1-c_{k-1}-c_k\right) - (x_{k-1, 2}-\mathbb{E} x_{k-1, 2}) \; \mathbb{E} x_{k-1, 1} \notag \\
& + \left( x_{k, 2}-\mathbb{E} x_{k, 2}\right) \left(1- \mathbb E s_{k-1} \right) . \label{jnkk}
\end{align}

Since by condition {\bf H}, 
$$\sqrt{\beta' n}\frac{p_1+p_2}{\sqrt{np_1}}\sqrt{\frac{(n-k+1)(p_1+p_2-n-k+2)}{\beta'(p_1+p_2-2k+3)^3}}\to \sqrt{\gamma}\sqrt{1-\frac{\sigma\gamma}{1+\sigma}}, $$
it follows from Lemma \ref{beta} and the distribution of $s_k$, we know 
$$\sqrt{\beta' n} (x_{k, 1}-\mathbb{E}x_{k, 1})\to \mathcal{N}\left(0, \frac{\gamma(1+\sigma-\sigma\gamma)}{1+\sigma}\right) $$ 
and 
\begin{align*}
	& \sup_{n} \mathbb E \left| \sqrt{\beta' n} (x_{k, 1}-\mathbb{E}x_{k, 1})  \right|^4  \\
	= & \sup_{n} (\sqrt{\beta' n}) ^4 \left( \frac{p_1+p_2}{\sqrt{np_1}}\right)^4  \mathbb E \left| s_{k} - \mathbb E s_{k}  \right|^4  \\
	= & \sup_{n} (\sqrt{\beta' n}) ^4 \left( \frac{p_1+p_2}{\sqrt{np_1}}\right)^4  O \left( \frac{n^2(p_1+p_2-n)^2}{\beta'^2(p_1+p_2)^6}  \right)\\
	=& \sup_{n}O \left( \frac{n^2(p_1+p_2-n)^2}{p_1^2(p_1+p_2)^2}  \right) <  \infty  .
\end{align*}

Similarly, we know 
$$\sqrt{\beta' n} (x_{k, 2}-\mathbb{E}x_{k, 2})\to \mathcal{N}\left(0, \frac1{1+\sigma}\right), $$ 
and
$$\sup_{n} \mathbb E \left| \sqrt{\beta' n} (x_{k, 2}-\mathbb{E}x_{k, 2}) \right|^4 = \sup_{n}  \left( \frac{p_2^2}{(p_1+p_2)^2}   \right) < \infty . $$

Also, 
$1-c_{k-1}-c_k\to (1-\sigma)/(1+\sigma) $ almost surely as $n\to\infty$ and $\mathbb{E}x_{k,1}\to\sqrt{\gamma}$ and $1- \mathbb{E}s_{k}\to 1- \gamma \sigma /(1+\sigma)$  for any $k\geq 2.$
Combining the above results with the independence of $s_{k-1}, c_{k-1}$ and $c_k$, Lemma \ref{slutcoro} helps us obtain he weak limit \eqref{limclt1}.

We also get
\begin{align}\label{supan}
	\sup_{çn\geq 1}\mathbb{E} |\tilde{a}_k^{(n)}|^4<\infty
\end{align}
for all $k\geq 1,$  due to
\begin{align*}
\mathbb{E}[|\tilde{a}_k^{(n)}|^{4}]  \leq & 27 \mathbb E \left| \sqrt{\beta' n} (x_{k-1, 1}-\mathbb{E}x_{k-1, 1})  \right|^4 \mathbb E\left( \left(1-c_{k-1}-c_k\right)^4 \right)  \\
& + 27 \mathbb E \left| \sqrt{\beta' n} (x_{k-1, 2}-\mathbb{E}x_{k-1, 2})  \right|^4 \left(\mathbb E x_{k-1,1} \right)^4 \\
& +  27 \mathbb E \left| \sqrt{\beta' n} (x_{k, 2}-\mathbb{E}x_{k, 2})  \right|^4 \left( 1- \mathbb Es_{k-1}\right)^4 . 
\end{align*}

Now we work on \eqref{limclt2}. We see that
\begin{align}
&\quad \widetilde{\mathcal{J}}_{n}(k, k+1)-\mathbb{E}\widetilde{\mathcal{J}}_{n}(k, k+1)=\frac{\widetilde{\mathcal{J}}^2_{n}(k, k+1)-(\mathbb{E} \widetilde{\mathcal{J}}_{n}(k, k+1))^2}{\widetilde{\mathcal{J}}_{n}(k, k+1)+\mathbb{E}\widetilde{\mathcal{J}}_{n}(k, k+1)}.
\end{align}
     Thus for the verification of \eqref{limclt2}, we need to prove three things: 
\begin{equation}\label{cltnew1}
\sqrt{\beta' n}\left(\widetilde{\mathcal{J}}^2_{n}(k, k+1)-\mathbb{E} \widetilde{\mathcal{J}}^2_{n}(k, k+1)\right)	\xrightarrow{\rm weakly}{} \mathcal{N}(0, \sigma_3^2), \quad n\to\infty,
\end{equation} 
and  
\begin{equation}\label{cltnew1.1}
\sup_{n} \mathbb E \left| \sqrt{\beta' n}\left(\widetilde{\mathcal{J}}^2_{n}(k, k+1)-\mathbb{E} \widetilde{\mathcal{J}}^2_{n}(k, k+1)\right) \right|^4 < \infty,
\end{equation} 
and 
\begin{equation}\label{cltnew2}
\sqrt{\beta' n}\left(\mathbb{E}\widetilde{\mathcal{J}}^2_{n}(k, k+1)-(\mathbb{E} \widetilde{\mathcal{J}}_{n}(k, k+1))^2\right)	\to 0, \quad n\to\infty.
\end{equation}
Recall for $k \geq 2,$
$$\widetilde{\mathcal{J}}_n^2(k, k+1)  = \frac{\left(p_1+p_2\right)^2}{n p_1} c_k\left(1-c_k\right) s_k\left(1-s_{k-1}\right).$$
Let $y_{k,1}= \frac{p_1+p_2}{n} s_k$ and $y_{k,1} = \frac{p_1+p_2}{p_1}c_k. $ Owing to 
\begin{align}\label{jnkk1}
	& \widetilde{\mathcal{J}}_n^2(k, k+1)-\mathbb{E} \widetilde{\mathcal{J}}_n^2(k, k+1) \\
	= & (1-s_{k-1})(1-c_k- \mathbb E c_k) y_{k,1} \cdot \left( y_{k,2} - \mathbb E y_{k,2} \right) 
	 + \mathbb E y_{k, 2}\left(1 - \mathbb E c_k \right)\left(1- s_{k-1}\right) \cdot \left( y_{k, 1} - \mathbb E y_{k, 1} \right) \notag \\
	  & - \mathbb E y_{k, 2} \mathbb E s_k \left( 1 - \mathbb E c_k \right)\left( y_{k-1, 1} - \mathbb E y_{k-1, 1}  \right) 
	  + \mathbb E y_{k, 1} \left(1 - \mathbb E s_{k-1} \right)\mathbb E \left( \left(y_{k,2} - \mathbb E y_{k,2} \right)\left( c_k + \mathbb E c_k \right) \right), \notag
\end{align}  
 the verifications of \eqref{cltnew1} and \eqref{cltnew1.1}  are similar to those of \eqref{limclt1} and \eqref{supan} and omitted here. 
We check the limit \eqref{cltnew2} now.  
If we write
	\begin{align*}
		&  \sqrt{\beta'n} \left( \left(\mathbb{E}\widetilde{\mathcal{J}}_{n}(k, k+1)  \right)^2-\mathbb{E}\widetilde{\mathcal{J}}^2_{n}(k, k+1) \right)  \\
		=&  \mathbb E \left( \sqrt{\beta'n}\left( \widetilde{\mathcal{J}}^2_{n}(k, k+1)-\mathbb{E}\widetilde{\mathcal{J}}^2_{n}(k, k+1)   \right) \times  \frac{\mathbb E \widetilde{\mathcal{J}}_{n}(k, k+1) + \sqrt{\mathbb{E}\widetilde{\mathcal{J}}^2_{n}(k, k+1)}   }{\widetilde{\mathcal{J}}_{n}(k, k+1) + \sqrt{\mathbb{E}\widetilde{\mathcal{J}}^2_{n}(k, k+1)} } \right) \\
		=: & \mathbb E h_n  
	\end{align*}
	Under conditions (1)  and \eqref{cltnew1} and \eqref{cltnew1.1},  we know $$ \frac{\mathbb E \widetilde{\mathcal{J}}_{n}(k, k+1) + \sqrt{\mathbb{E}\widetilde{\mathcal{J}}^2_{n}(k, k+1)}   }{\widetilde{\mathcal{J}}_{n}(k, k+1) + \sqrt{\mathbb{E}\widetilde{\mathcal{J}}^2_{n}(k, k+1)} } \to 1 ,\quad n \rightarrow \infty   $$ almost surely  and $h_n$ converges weakly to a normal  distribution with zero mean and $\sup_n  \mathbb E |h_n|^4 < \infty, $ so \eqref{cltnew2}  holds.

  We see clearly that 
  $$\aligned \mathbb{E}|\tilde{b}_{k}^{(n)}|^4
  \le & 8 \mathbb E \left|  \frac{\sqrt{\beta' n}\left(\widetilde{\mathcal{J}}^2_{n}(k, k+1)-\mathbb{E} \mathcal{J}^2_{n}(k, k+1)\right)}{\widetilde{\mathcal{J}}_{n}(k, k+1)+\mathbb{E}\widetilde{\mathcal{J}}_{n}(k, k+1)}  \right|^4 \\
  & + 8 \mathbb E \left|  \frac{\mathbb{E} \mathcal{J}^2_{n}(k, k+1)-(\mathbb{E} \mathcal{J}_{n}(k, k+1))^2}{\widetilde{\mathcal{J}}_{n}(k, k+1)+\mathbb{E}\widetilde{\mathcal{J}}_{n}(k, k+1)}  \right|^4. 
  \endaligned $$ 
The condition (1) and  \eqref{cltnew1.1} and \eqref{cltnew2}  imply 
$$\sup_{n\geq 1}\mathbb{E} |\tilde{b}_k^{(n)}|^4<\infty$$ for all $k\geq 1.$ 
 
 Combining \eqref{jnkk}, \eqref{jnkk1} with the independence of $s_k$ and $c_k$, Lemma \ref{slutcoro} ensured the joint weak convergence holds.
 
At last, we show \begin{align*}
	\sqrt{\beta^{\prime}n}\left(\left\langle\mu_{n}, p \right\rangle-\mathbb E \left\langle\mu_{n}, p \right\rangle  \right) {\rightarrow} \mathcal{N}\left(0, \sigma_{p}^{2}\right)
\end{align*}
weakly as $n\to\infty.$  It suffices to prove that  \begin{equation}\label{flimit}
	\sqrt{\beta^{\prime}n}\left( f\left(\left(  \mathbb{E}\widetilde{\mathcal{J}}_{n}(i, i) , \mathbb{E}\widetilde{\mathcal{J}}_{n}(i, i+1) \right)_{ 1\leq i \leq m } \right)     - \mathbb E f\left(\left( \widetilde{\mathcal{J}}_n(i, i) , \widetilde{\mathcal{J}}_n(i, i+1) \right)_{ 1\leq i \leq m }  \right) \right) \to 0 \end{equation} as $n\to\infty$. 

Applying the Taylor expansion, we know 
\begin{align*}
 & \sqrt{\beta^{\prime}n}\left( f\left(\left( \mathbb{E}\widetilde{\mathcal{J}}_{n}(i, i) , \mathbb{E}\widetilde{\mathcal{J}}_{n}(i, i+1)  \right)_{ 1\leq i \leq m }  \right)     - \mathbb E f\left(\left( \widetilde{\mathcal{J}}_n(i, i) , \widetilde{\mathcal{J}}_n(i, i+1)  \right)_{ 1\leq i \leq m }   \right) \right)   \\
 =& \sqrt{\beta^{\prime}n} \mathbb E \left( f\left(\left(  \mathbb{E}\widetilde{\mathcal{J}}_{n}(i, i) , \mathbb{E}\widetilde{\mathcal{J}}_{n}(i, i+1)  \right)_{ 1\leq i \leq m }  \right)     - f\left(\left( \widetilde{\mathcal{J}}_n(i, i) , \widetilde{\mathcal{J}}_n(i, i+1)  \right)_{ 1\leq i \leq m }   \right) \right)\\
 = &   \sqrt{\beta^{\prime}n} \mathbb E \left( \sum_{\sum_{i=1}^{m}\left(\alpha_{i}+\beta_{i}\right) \geq 1}c(\alpha, \beta) \prod_{i=1}^{m}\left(\frac{\tilde{a}_i^{(n)}}{\sqrt{\beta' n}} \right)^{\alpha_{i}}\left(\frac{\tilde{b}_i^{(n)}}{\sqrt{\beta' n}} \right)^{\beta_{i}} \right),
\end{align*}
where $\left\{\alpha_{i}\right\}$ and $\left\{\beta_{i}\right\}$ are non negative integers and $\sum_{i=1}^{m}\left(\alpha_{i}+\beta_{i}\right) \geq 1$ and the sum above consists of finitely many terms since $f$ is a polynomial function.  On the one hand, the term corresponding to $\sum_{i=1}^{m}\left(\alpha_{i}+\beta_{i}\right) = 1$ multiplied by $\sqrt{\beta' n} $ converges to 0 in probability, which and the fourth condition guarantee that 
\begin{align*}
 \sqrt{\beta^{\prime}n} \;\mathbb E \left(c(\alpha, \beta) \prod_{i=1}^{m}\left(\frac{\tilde{a}_i^{(n)}}{\sqrt{\beta' n}} \right)^{\alpha_{i}}\left(\frac{\tilde{b}_i^{(n)}}{\sqrt{\beta' n}} \right)^{\beta_{i}}  \right)	  
	\to  0.
\end{align*}
On the other hand, Lemma \ref{beta} ensures that the term related to $\sum_{i=1}^{m}\left(\alpha_{i}+\beta_{i}\right) \geq 2$ multiplied by $\sqrt{\beta' n} $ converges to 0 in $L^q$. Finally, we get 
the desired limit \eqref{flimit}.

The proof is completed. 
\end{proof}

\subsection{Large deviation of \texorpdfstring{$\mu_n$}{}}
We first recall the definition of the large deviation principle. Let $U$ be a topological Hausdorff space with Borel $\sigma$ -algebra $\mathcal{B}(U)$ and $\mathcal{I}: U \rightarrow[0, \infty]$ a lower semicontinuous function. We say that a sequence $\left(P_{n}\right)_{n\geq 1}$ of probability measures on $(U, \mathcal{B}(U))$ satisfies the large deviation principle (LDP) with speed $a_{n}$ and rate function $\mathcal{I}$ if:

\begin{enumerate}
	\item For all closed sets $F \subset U,$ 
$$
\varlimsup _{n \rightarrow \infty} \frac{1}{a_{n}} \log P_{n}(F) \leq-\inf _{x \in F} \mathcal{I}(x).
$$
	\item For all open sets $O \subset U,$ 
$$
\varliminf _{n \rightarrow \infty} \frac{1}{a_{n}} \log P_{n}(O) \geq-\inf _{x \in O} \mathcal{I}(x).
$$
\end{enumerate}

In the following, we say that a random variable is $\operatorname{Gamma}(a)$ distribution if it has the density
$$
\frac{x^{a-1}}{\Gamma(a)} e^{-x} \mathbf{1}_{\{x>0\}}
$$
with $a>0$. The logarithmic moment generating function of the $\operatorname{Gamma}(a)$ distribution is $$\Lambda(t):=\log \mathbb E\left(e^{tx}\right)  =-a \log (1-t)$$(for $t<1)$ with Fenchel-Legendre transform
 $$ \mathcal I(x):=\sup_{t \in R } \{tx-\Lambda(t)\} =a g\left( x/a\right),$$ where the function $g$ is defined by
     \begin{equation}\label{defg}
g(x)=
\begin{cases}
x-\log x-1, & x >0;\\
\infty, & \text{otherwise.}
\end{cases}
\end{equation} 

Before the statement of Jacobi mapping between tridiagonal matrices and its spectral measures, we first introduce the topology on moments spaces, where the spectral measures belong.  
 
{\bf Topology on moments spaces.} Given $I$ a subset of $\mathbb{R}.$ Let $\mathcal{M}^1(I)$ be the collection of all probability measures on the interval $I$ and $$\mathcal{M}_m^1(I):=\left\{\mu\in \mathcal{M}^1(I): \; \left\langle\mu, x^{k}\right\rangle<+\infty, \; \forall \,k\geq 1\right\}.$$ 
 For any $\mu \in \mathcal{M}_m^1(I),$ we set
$$\mathbf{m}(\mu)=\left(\left\langle\mu, x^{k}\right\rangle\right)_{k \geq 1}, $$
which is in $\mathbb{R}^{\mathbb{N}}.$ 
We endow $\mathcal{M}_m^1(I)$ with the distance of convergence of moments:
$$
d(\mu, \nu)=\sum_{k=1}^{\infty} 2^{-k} \frac{\left|\left\langle\mu, x^{k}\right\rangle-\left\langle\nu, x^{k}\right\rangle\right|}{1+\left|\left\langle\mu, x^{k}\right\rangle-\left\langle\nu, x^{k}\right\rangle\right|}.
$$
Denoted by $\mathcal{M}_{m, d}^1(I)$ the subset of $\mathcal{M}_m^1(I),$ whose elements are uniquely determined by their moments. The mapping $\mathbf{m}: \mathcal{M}_{m, d}^1(I)\to \mathbb{R}^{\mathbb{N}}$ is then injective and continuous.

We consider $n \times n$ matrices corresponding to measures supported by $n$ points and semi-infinite matrices corresponding to measures with bounded infinite support.

In the theory of orthogonal polynomials on the real line, given a probability measure $\mu$ on $\mathbb R$ with bounded infinite support (resp. $\mu_n $ with a finite support consisting of $n$ points), the orthonormal polynomials obtained by applying the orthonormalizing Gram-Schmidt procedure to the sequence $1, x, x^2, \ldots$ obeying the recursion relation
\begin{align}\label{pkrec}
x P_{k}(x)=b_{k} P_{k+1}(x)+a_{k} P_{k}(x)+b_{k-1} P_{k-1}(x)	
\end{align}
for $k \geq 1$ (resp. for $1 \leq k \leq n)$ where $\left\{a_k\right\}_{k \geq 1}, \left\{b_k\right\}_{k \geq 1}$ are two sequences of uniformly bounded real numbers such that $a_k\in \mathbb R$ and $b_k \in \mathbb R^+$.  The parameters $\left\{a_k, b_k\right\}_{k \geq 1}$ (resp. $\left\{a_k\right\}_{1 \leq k \leq n }, \left\{b_k\right\}_{1 \leq k \leq n -1}$) are called the Jacobi coefficients with measure $\mu$ (resp. $\mu_{n}$). 

As it is well known (\cite{Simon1}), when $I$ is $\mathbb R$ or $n$ points, \eqref{pkrec} sets up the one-to-one correspondence between $\mu \in \mathcal{M}_{m, d}^1(I)$ and Jacobi coefficients $\left\{a_k, b_k\right\}_{k \geq 1}$ with $a_k\in \mathbb R$ and $b_k \in \mathbb R^+$ and $\sup_n \left( |a_n|+|b_n|\right) < \infty.$ 
It is also well known (see \cite{Chihara}) that the measure $\mu \in  \mathcal{M}_{m, d}^1\left([0, \infty)\right)$ if and only if there exists a sequence $\left\{z_k\right\}_{k \geqslant 1}$ of positive numbers such that the Jacobi coefficients in the recurrence relation \eqref{pkrec} satisfy for all $k \geq 1$ 
\begin{align}\label{decojaco}
 a_k=z_{2 k-2}+z_{2 k-1}, 
 \quad
 b_k^2=z_{2 k-1} z_{2 k},
\end{align}  
where $z_0 = 0.$
In particular, the Marchenko-Pastur law corresponds to $z_{2 k-1}^{\mathrm{MP}}=1$ and $z_{2 k}^{\mathrm{MP}}= \gamma$ for all $k \geq 1$. 
It was further discovered by Wall \cite{Wall} that $\mu \in  \mathcal{M}_{m, d}^1([0,1]) $ if and only if the coefficients $z_k$ form a chain sequence; that is, they can further be decomposed as
\begin{align}\label{defzk}
z_k=p_k \left(  1-p_{k-1} \right)
\end{align}
where $p_0 =1 $ and $0<p_k<1$ for $k \geq 1$. 
Especially, the Wachter law corresponds to $p_{2 k-1}^{\mathrm W}= \frac{\sigma}{1+\sigma} $ and $p_{2 k}^{\mathrm W}= \frac{\gamma\sigma}{1+\sigma} $ for all $k \geq 1$.

In this paper, we consider the case $\mu\in \mathcal{M}_{m, d}^1\left(\left[ -\frac{1}{\sqrt{\gamma}} , \frac{1}{\sqrt{\gamma}\sigma} \right]\right)$ when $\gamma\in [0, 1]$ and $\gamma\sigma\in [0, 1].$ In the cases of $\gamma =0$ or $\gamma\sigma =0$, $+\infty$ will replace $\frac{1}{\sqrt{\gamma}}$ or $\frac{1}{\sqrt{\gamma}\sigma},$ respectively.
For any $\mu\in \mathcal{M}_{m, d}^1\left(\left[ -\frac{1}{\sqrt{\gamma}} , \frac{1}{\sqrt{\gamma}\sigma} \right]\right)$ when $\gamma \in (0,1]$ and $\sigma \gamma \in (0,1],$ the Jacobi coefficients $\left\{a_k, b_k\right\}_{k \geq 1}$ corresponding to $\mu$ can be decomposed uniquely into
\begin{align} \label{newakbk}
\begin{cases}
	a_k =  \frac{1+\sigma}{\sqrt{\gamma}\sigma} \left( p_{2k-2} \left(1-p_{2k-3}\right)+ p_{2k-1} \left(1-p_{2k-2}\right) \right) - \frac{1}{\sqrt{\gamma}} \\
	b_k^2 = \frac{(1+\sigma )^2}{\gamma \sigma^2} p_{2k-1} \left(1- p_{2k-2} \right) p_{2k} \left(1- p_{2k-1} \right)
\end{cases}
\end{align}  
with $p_{-1}=p_0 = 1$ and $0 < p_k < 1 $ for $k \geq 1.$ Set $v_k= \frac{1+\sigma}{\gamma\sigma } p_{2k}$ and $u_k = \frac{(1+\sigma)p_{2k-1} - \sigma}{\sqrt{\gamma}\sigma}$ for $k\geq 0,$ the decomposition 
\eqref{newakbk} can be rewritten as
\begin{align}\label{xyde1}
\begin{cases}
	a_k   =\sqrt{\gamma} \frac{ 1-\sigma}{1+\sigma}  v_{k-1}-  \frac{ \gamma\sigma }{1+\sigma}  v_{k-1}\left(u_{k-1}+u_k \right)+u_{k}, \\ 
 b_k^2   = \frac{1 }{1+ \sigma }v_k\left(1- \sigma \sqrt{\gamma}u_{k}\right) \left(1+\sqrt{\gamma}u_{k}\right)  \left(1- \frac{\gamma \sigma}{1+\sigma} v_{k-1} \right),
\end{cases}
\end{align}
with $v_0 = u_0 =0$ and $ v_k \in \left( 0, \frac{1+\sigma}{\gamma \sigma} \right) $ and $ u_k \in \left(  -\frac{1}{\sqrt{\gamma}} , \frac{1}{\sqrt{\gamma} \sigma} \right).$   
For the cases $\gamma =0$ or $\gamma\sigma =0,$ the expression \eqref{xyde1} still hold via a similar argument. 
Particularly, the probability $\tilde{\nu}_{\gamma, \sigma}$ given by \eqref{dens} corresponds to $u_k\equiv 0$ and $  v_k\equiv 1$ (More details are given in Lemma \ref{Jacobimu}).

Gamboa and Rouault (\cite{Gamboa11}) proved that the spectral measure $\mu_n$ of $\beta$-ensemble satisfies a large deviation principle with a rate function encoded by Jacobi coefficients. 
In the case of Hermite, the spectral measure $\mu_n$ satisfies in $\mathcal{M}_{m, d}^1\left( \mathbb R \right)$ a large deviation principle with speed $\beta n/2$ and rate function 
$$
\mathcal{I}_H(\mu)=\sum_{k=1}^{\infty}\left(\frac{1}{2} a_k^2+g\left(b_k^2\right)\right),
$$
where $a_k$ and $b_k$ are the recursion coefficients of polynomials orthonormal with respect to $\mu, g$ as in \eqref{defg} and both sides may be equal to $+\infty$ simultaneously.
In the case of Laguerre, the spectral measure $\mu_n$ satisfies in $\mathcal{M}_{m, d}^1((0, \infty))$ a large deviation principle with speed $\beta n\gamma/2$ and rate function 
$$
\mathcal{I}_L(\mu) =\sum_{k=1}^{\infty}\left( g\left(z_{2 k-1}\right)+\gamma g\left( \frac{z_{2 k}}{\gamma} \right)\right),
$$
where $z_k$ is given in \eqref{defzk}.
In the case of Jacobi, the spectral measure $\mu_n $ satisfies in $\mathcal{M}_{m, d}^1([0,1])$ a large deviation principle with rate function 
\begin{align*}
	\mathcal{I}_J(\mu)  = & \sum_{k=1}^{\infty} \left(\frac{1+\sigma-\gamma \sigma}{\gamma \sigma}g \left(1 + p_{2k+1} \right) + g \left(1 - p_{2k+1} \right)+ \frac{1}{\gamma \sigma} g \left(1 + p_{2k} \right) +  \frac{1}{\gamma} g \left(1 - p_{2k} \right) \right)
\end{align*}  
with $p_k$ being given in \eqref{defzk}.

For the convenience of the presentation, we first introduce the following functions. For any $\mu\in \mathcal{M}_{m, d}^1\left(\left[ -\frac{1}{\sqrt{\gamma}} , \frac{1}{\sqrt{\gamma}\sigma} \right]\right),$ define
\begin{align}\label{defIrgs}
\begin{aligned}
I_{\gamma, \sigma}(\mu):=& \begin{cases}K_{\gamma, \sigma} ,  & 0<\sigma\gamma\le 1; \\
\sum\limits_{k=1}^\infty \left[ \frac{1}{\gamma} g\left( 1+\sqrt{\gamma}u_k \right) + g(v_k)\right] ,  & \sigma=0,  0< \gamma\le 1; \\
\sum\limits_{k=1}^\infty \left[ \frac{1+\sigma}{2}\left( u_k \right)^2+g(v_k)\right] , & \gamma=0, \sigma\geq 0;\\
  \end{cases}	
\end{aligned}
\end{align} 
and $I_{\gamma, \sigma}=+\infty$ otherwise. 
Here, $\{u_k, v_k\}_{k\geq 1}$  are uniquely decomposed by the Jacobian coefficients corresponding to $\mu$ through \eqref{xyde1} and $g$ is defined as in \eqref{defg} and $$K_{\gamma, \sigma}:= \sum\limits_{k=1}^\infty \left[  \frac{1}{\gamma} g\left(1+ \sqrt{\gamma}u_k\right) +\frac{1}{\gamma\sigma} g\left(1-\sqrt{\gamma}\sigma u_k \right)  +	g(v_k) +  \frac{1-c_{\gamma, \sigma}}{c_{\gamma,\sigma}}  g\left( \frac{1-c_{\gamma,\sigma} v_k}{1-c_{\gamma, \sigma}} \right) \right],$$
where $c_{\gamma, \sigma} = \frac{\gamma \sigma}{1+\sigma}.$

\begin{thm}\label{ldpjacobi}
Let $\mu_{n}$ be the spectral measure of $ \widetilde{\mathcal{J}}_{n} $ as in Theorem \ref{spe}. Under the assumption {\bf H}, $\mu_{n}$ satisfies in $\mathcal{M}_{m, d}^1\left(\left[- \frac{1}{\sqrt{\gamma}} ,  \frac{1}{\sqrt{\gamma}\sigma } \right]\right)$  a large deviation principle  with speed $\beta^{\prime} n$ and good rate function $ I_{\gamma, \sigma}$ defined as above. The unique minimizer of $ I_{\gamma, \sigma}$ is $\tilde{\nu}_{\gamma, \sigma},$ whose density function is  given by \eqref{dens}.
 \end{thm}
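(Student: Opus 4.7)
The plan is to follow the Gamboa--Rouault strategy: transfer the LDP from the Beta random variables defining $\widetilde{\mathcal{J}}_n$ through the Jacobi coefficients to the spectral measure $\mu_n$ via continuity of the moment map. Since $\mu_n$ is the spectral measure of the tridiagonal matrix $\widetilde{\mathcal{J}}_n$, its Jacobi coefficients $(a_k^{(n)}, b_k^{(n)})$ are exactly the entries given by \eqref{jnak}--\eqref{jnbk}. Applying the change of variables \eqref{xyde1}, I would extract the reduced coordinates $(u_k^{(n)}, v_k^{(n)})$ and verify that, up to deterministic shifts that vanish under hypothesis $({\bf H})$, they are explicit rescalings of the independent Beta variables $(c_k, s_k)$ from \eqref{cksk}.

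Next, I would establish LDPs at speed $\beta' n$ for the individual coordinates. Writing a $\operatorname{Beta}(\beta' x_n, \beta' y_n)$ variable as $X_n/(X_n+Y_n)$ with independent $X_n \sim \operatorname{Gamma}(\beta' x_n)$, $Y_n \sim \operatorname{Gamma}(\beta' y_n)$ and applying Cram\'er's theorem with the logarithmic moment generating function $\Lambda(t) = -a\log(1-t)$ recalled in the excerpt, each coordinate satisfies an LDP whose rate function is a sum of terms of the form $\alpha\, g(\cdot)$ with $g$ as in \eqref{defg}. The three regimes in \eqref{defIrgs} arise naturally from the three limiting behaviors of the ratios $(p_1-k+1)/n$, $(p_2-k+1)/n$, $(n-k)/n$, $(p_1+p_2-n-k+1)/n$: when $\gamma\sigma>0$ all four stay bounded and positive, producing the four $g$-terms comprising $K_{\gamma,\sigma}$; when $\sigma=0$ the $p_2$-parameter dominates and collapses two of the terms; when $\gamma=0$ the variable $c_k$ concentrates on a shrinking scale and a Gaussian LDP for $\sqrt{\beta' n}(c_k-p_1/(p_1+p_2))$ replaces the corresponding $g$-term by the quadratic $\tfrac{1+\sigma}{2}u_k^2$. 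By independence of the $(c_k,s_k)$ family, the joint rate function is additive in $k$.

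The Dawson--G\"artner projective limit theorem then lifts the finite-dimensional LDPs to an LDP on $\mathbb{R}^{\mathbb{N}}$ for the full sequence $(u_k^{(n)}, v_k^{(n)})_{k\ge 1}$. Since the map from $(u_k,v_k)_{k\ge 1}$ to the Jacobi coefficients $(a_k,b_k)_{k\ge 1}$ is continuous via \eqref{xyde1}, and since the moment map $\mathbf{m}$ is a continuous bijection (with continuous inverse) between the Jacobi coefficients and the corresponding probability measure in $\mathcal{M}_{m,d}^1([-1/\sqrt{\gamma},1/(\sqrt{\gamma}\sigma)])$ equipped with the moment topology, the contraction principle yields the desired LDP for $\mu_n$ with good rate function $I_{\gamma,\sigma}$. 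The unique minimizer is identified by setting every summand to zero: $u_k=0$ and $v_k=1$ for all $k$, which via \eqref{xyde1} recovers the Jacobi parameters of $\widetilde{\mathcal{J}}_{\gamma,\sigma}$ in \eqref{Jform}, whose spectral measure is $\tilde{\nu}_{\gamma,\sigma}$ by Lemma \ref{Jacobimu}. The hardest step is expected to be the $\gamma=0$ regime, where the standard Gamma/Cram\'er argument for $c_k$ degenerates and a separate Gaussian-tilt analysis is required to produce the quadratic term; the verification of exponential tightness of the Jacobi coefficient sequence and of the goodness (compactness of sub-level sets) of $I_{\gamma,\sigma}$ in the moment topology is the other technical point, but both can be handled using the uniform boundedness of $\widetilde{\mathcal{J}}_n$'s entries inherited from $({\bf H})$.
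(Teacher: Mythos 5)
Your proposal follows essentially the same route as the paper: write the Beta coefficients as ratios of independent Gamma variables, obtain finite-dimensional LDPs at speed $\beta' n$ via the limiting log-moment generating functions, use independence across $k$ to make the rate additive, lift to $\mathbb{R}^{\mathbb{N}}$ via Dawson--G\"artner, and transfer to $\mu_n$ through the (continuous, injective) moment map, with the minimizer read off from $u_k=0$, $v_k=1$. Two small technical caveats: because the Gamma parameters grow with $n$, the correct tool is the G\"artner--Ellis theorem applied to the rescaled Gamma vectors, not Cram\'er's theorem as you invoke (Cram\'er's theorem in its classical form requires i.i.d.\ summands with a fixed law); the paper's Lemmas \ref{ldp1} and \ref{ldp2} do exactly this G\"artner--Ellis computation for carefully chosen four-component vectors $\boldsymbol{M}^{(n)}$, $\boldsymbol{P}^{(n)}$, which simultaneously encode the several rescalings of $c_k$, $s_k$ needed for the entries $\widetilde{\mathcal{J}}_n(k,k)$, $\widetilde{\mathcal{J}}_n(k,k+1)$, and then pass to the $\boldsymbol{\bar C}^{(k,n)}$, $\boldsymbol{\bar S}^{(k,n)}$ coordinates by contraction. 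Your worry about the $\gamma=0$ regime requiring a separate ``Gaussian-tilt'' argument is actually unnecessary in that framework: the quadratic term $\frac{1+\sigma}{2}u_k^2$ emerges uniformly from the limiting cumulant $\Lambda^{\boldsymbol{M}}_{0,\sigma}(\boldsymbol t)=\frac{1+\sigma}{2}t_1^2+\cdots$ in the G\"artner--Ellis limit, so no case split in the probabilistic argument is needed, only in the resulting rate function formulas. Aside from these refinements, your plan matches the paper's proof.
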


Before the proof of Theorem \ref{ldpjacobi}, we state two key lemmas on the large deviation related to Gamma random variables. 

\begin{lem}\label{ldp1} Assume the condition {\bf H} holds. Let $G_1 \sim Gamma(a_n) $ and $G_2 \sim Gamma(b_n) $ be two independent random variables satisfying
$$
\lim_{n\rightarrow \infty} \frac{a_n}{\beta^{\prime}p_1} =\lim_{n\rightarrow \infty} \frac{b_n}{\beta^{\prime}p_2} = 1. 
$$ 
Then, the vector $$ \boldsymbol{M}^{(n)}=\left(\frac{p_2G_1-p_1G_2}{\beta^{\prime }\sqrt{np_1}p_2}, \frac{G_1+G_2}{\beta^{\prime }p_2} ,\frac{G_1}{\beta^{\prime }p_2},\frac{p_1+p_2}{\beta^{\prime }p_1p_2}G_1 \right) $$ satisfies a large deviation principle with speed $\beta^{\prime}n$ and good rate function
 \begin{equation}\label{Igs11}
I_{\gamma, \sigma}^ {\boldsymbol{M}}( \boldsymbol{x})= 
\begin{cases}
 \frac{1}{\gamma} g\left( \frac{x_2+\sqrt{\gamma}x_1}{1+\sigma}\right) +\frac{1}{\gamma\sigma} g\left(\frac{x_2-\sqrt{\gamma}\sigma x_1}{1+\sigma} \right)  +\infty\mathbf 1_{\{\boldsymbol{x} \notin D^ {\boldsymbol{M}}  \} }  , & 0<\sigma \gamma \leq 1 ;\\
\frac{1}{\gamma}g\left( 1+\sqrt{\gamma}x_1\right) + \infty \mathbf 1_{\{\boldsymbol{x} \notin D^ {\boldsymbol{M}} \text{ or } x_2 \neq 1+\sigma\}}   , & \sigma=0,0<\gamma \leq 1 ;\\
\frac{x_1^2}{2(1+\sigma)}	 + \infty \mathbf 1_{\{\boldsymbol{x} \notin D^ {\boldsymbol{M}} \text{ or } x_2 \neq 1+\sigma\}}  , & \gamma=0, \sigma \geq 0;
\end{cases}
\end{equation}
where $D^ {\boldsymbol{M}}:= \{ \boldsymbol{x} \in \mathbb R^4 :   x_3= \frac{\sigma}{1+\sigma}\left(\sqrt{\gamma} x_1+x_2\right),x_4=\sqrt{\gamma}x_1+x_2   \}. $ Furthermore,
$$\lim\limits_{\sigma \rightarrow 0}I_{\gamma, \sigma}^ {\boldsymbol{M}}(x_1, 1+\sigma, x_3, x_4) = I_{\gamma, 0}^ {\boldsymbol{M}}(x_1, 1, x_3, x_4)$$ and
$$\lim\limits_{\gamma \rightarrow 0}I_{\gamma, \sigma}^ {\boldsymbol{M}}(x_1, 1+\sigma, x_3, x_4) = I_{0, \sigma}^ {\boldsymbol{M}}(x_1, 1+\sigma, x_3, x_4) . $$
\end{lem}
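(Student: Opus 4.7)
Proof proposal. The plan is to apply the Gärtner--Ellis theorem directly to $\boldsymbol{M}^{(n)}$, exploiting that $\boldsymbol{M}^{(n)}$ is an affine function of the two independent Gamma variables $G_1, G_2$ with explicit moment generating function.

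First, collect coefficients in order to write $\beta' n \langle t, \boldsymbol{M}^{(n)}\rangle = c_1^{(n)}(t)\, G_1 + c_2^{(n)}(t)\, G_2$ with
$$c_1^{(n)}(t) = t_1 \sqrt{n/p_1} + (t_2+t_3)\, n/p_2 + t_4\, n(p_1+p_2)/(p_1 p_2), \quad c_2^{(n)}(t) = -t_1 \sqrt{np_1}/p_2 + t_2\, n/p_2.$$
Using independence and $\mathbb{E}[e^{sG_i}] = (1-s)^{-a_i}$, for $c_i^{(n)}(t) < 1$,
$$\Lambda_n(t) := \frac{1}{\beta'n}\log \mathbb{E}\bigl[e^{\beta'n\langle t, \boldsymbol{M}^{(n)}\rangle}\bigr] = -\frac{a_n}{\beta'n}\log\bigl(1-c_1^{(n)}(t)\bigr) - \frac{b_n}{\beta'n}\log\bigl(1-c_2^{(n)}(t)\bigr).$$
The next step is to compute $\Lambda(t) = \lim_{n\to\infty} \Lambda_n(t)$ regime by regime. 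When $\gamma\sigma \in (0,1]$, the limits $a_n/(\beta'n)\to 1/\gamma$, $b_n/(\beta'n)\to 1/(\gamma\sigma)$, $c_i^{(n)} \to \tilde c_i$ with $\tilde c_1(t) = \sqrt{\gamma}t_1 + \gamma\sigma(t_2+t_3) + \gamma(1+\sigma)t_4$ and $\tilde c_2(t) = -\sqrt{\gamma}\sigma t_1 + \gamma\sigma t_2$ yield $\Lambda(t) = -(1/\gamma)\log(1-\tilde c_1(t)) - (1/(\gamma\sigma))\log(1-\tilde c_2(t))$. In the degenerate regimes $\sigma = 0$ or $\gamma = 0$ the prefactors diverge while $c_i^{(n)} \to 0$, so I would expand $-\log(1-c_i^{(n)}) = c_i^{(n)} + (c_i^{(n)})^2/2 + \cdots$ and check order by order; using the rates $n/p_1, n/p_2, p_1/p_2 \to$ their limits and identities such as $\sqrt{np_1}/p_2 = \sqrt{(n/p_2)(p_1/p_2)} \to 0$ when $\gamma = 0$, the cross and higher-order contributions all vanish, leaving a purely linear $\Lambda$ when $\sigma = 0$ and $\Lambda(t) = (1+\sigma)t_2 + \sigma t_3 + (1+\sigma) t_4 + (1+\sigma) t_1^2/2$ when $\gamma = 0$.

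The third step is the Legendre transform $\Lambda^*(\boldsymbol{x}) = \sup_t\{\langle t,\boldsymbol{x}\rangle - \Lambda(t)\}$. In the case $\gamma\sigma > 0$, $\Lambda$ depends on $t$ only through the two linear combinations $(\tilde c_1, \tilde c_2)$, so $\Lambda^*(\boldsymbol{x}) = +\infty$ unless $\boldsymbol{x}$ is orthogonal to the $2$-dimensional kernel of $t \mapsto (\tilde c_1(t),\tilde c_2(t))$, which produces precisely the two affine constraints defining $D^{\boldsymbol{M}}$. On $D^{\boldsymbol{M}}$, a linear change of dual variables $(s_1,s_2) \leftrightarrow (\tilde c_1,\tilde c_2)$ decouples the sup into two independent one-dimensional Legendre transforms of $u\mapsto -\log(1-u)$, each returning a factor of $g$; a direct algebraic simplification then matches $(1/\gamma) g\bigl(\tfrac{x_2+\sqrt{\gamma}x_1}{1+\sigma}\bigr) + (1/(\gamma\sigma)) g\bigl(\tfrac{x_2-\sqrt{\gamma}\sigma x_1}{1+\sigma}\bigr)$. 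In the two degenerate regimes, the purely linear part of $\Lambda$ in $t_2, t_3, t_4$ forces equality constraints ($x_2 = 1, x_3 = 0$ when $\sigma = 0$; $x_2 = 1+\sigma, x_3 = \sigma, x_4 = 1+\sigma$ when $\gamma = 0$), all compatible with $D^{\boldsymbol{M}}$, and the residual supremum in $t_1$ (and $t_4$ when $\sigma = 0$) produces either $(1/\gamma)g(1+\sqrt{\gamma} x_1)$ or $x_1^2/(2(1+\sigma))$, matching \eqref{Igs11}. After verifying essential smoothness of $\Lambda$ on the interior of its effective domain, Gärtner--Ellis yields the LDP with speed $\beta' n$ and good rate $I_{\gamma,\sigma}^{\boldsymbol{M}} = \Lambda^*$; goodness of the rate comes from the compactness of the level sets of $g$ and of the Gaussian piece. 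The continuity claims follow from the explicit formulas together with $g(1+h) = h^2/2 + O(h^3)$.

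The main technical obstacle will be the degenerate regimes, where one cannot obtain the rate function by continuity from the generic formula. For $\gamma = 0$ in particular, one must combine $\sqrt{n/p_1}, n/p_1, n/p_2 \to 0$ and $p_1/p_2 \to \sigma$ very precisely in the Taylor expansion of $\log(1 - c_i^{(n)})$: the linear-in-$t_1$ contributions from $c_1^{(n)}$ and $c_2^{(n)}$ individually diverge like $\sqrt{p_1/n}$ but cancel exactly, while the cross terms like $t_1 (t_2+t_3)$ and $t_1 t_4$ in $(a_n/\beta'n)(c_1^{(n)})^2/2$ turn out to be controlled by $\sqrt{np_1}/p_2$ and $\sqrt{n/p_1}(p_1+p_2)/p_2$, both of which vanish; only the clean $(1+\sigma) t_1^2/2$ survives, reflecting the transition from Gamma-type to Gaussian-type fluctuations. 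Once these cancellations are carefully verified, the remaining Gärtner--Ellis machinery applies without further complication.
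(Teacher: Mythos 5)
Your proposal is correct and follows the same strategy as the paper's proof: compute the log moment generating function of $\beta' n\,\boldsymbol{M}^{(n)}$, pass to the limit regime by regime under condition {\bf H}, verify the Gärtner--Ellis hypotheses, and obtain the rate function as the Legendre transform. You supply somewhat more explicit detail than the paper does about the Legendre transform (orthogonality to the kernel of $t\mapsto(\tilde c_1,\tilde c_2)$ giving $D^{\boldsymbol{M}}$, and the decoupling into two one-dimensional transforms of $u\mapsto -\log(1-u)$) and about the cancellation of divergent linear terms in the degenerate regimes, but the argument is the same.
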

\begin{proof}

The logarithm of the moment generating function of $\beta^{\prime}n \boldsymbol{M}^{(n)}$ is given by
\begin{align*}
	& \log \mathbb{E}\left[ \exp\left\{ \beta^{\prime}n \left\langle \boldsymbol t, \boldsymbol{M}^{(n)}\right\rangle\right\} \right] \\
	=&  -a_n \log\left( 1- t_1\sqrt{\frac{n}{p_1} } -\left(t_2+t_3\right) \frac{n}{p_2} -t_4\frac{n(p_1+p_2)}{p_1p_2}    \right) -b_n \log\left( 1+t_1 \sqrt{\frac{n}{p_1}} \frac{p_1}{p_2}-t_2\frac{n}{p_2} \right).
\end{align*}
Thus, for any positive $\gamma $ and $\sigma $ satisfying $0<\sigma \gamma \leq 1$,  we obtain the limit
\begin{align*}
	\Lambda^{\boldsymbol{M}}_{\gamma, \sigma}(\boldsymbol t) :
	=& \lim_{n \rightarrow \infty} ( \beta^{\prime}n)^{-1} \log \mathbb{E}\left[ \exp\left\{ \beta^{\prime}n \left\langle \boldsymbol t, \boldsymbol{M}^{(n)}\right\rangle\right\} \right] \\
	= & -\frac{1}{\gamma} \log \left(1-t_1 \sqrt{\gamma}-\left(t_2+t_3\right) \gamma \sigma-t_4 \gamma(1+\sigma)\right) \\
& -\frac{1}{\gamma \sigma} \log \left(1+t_1 \sqrt{\gamma} \sigma-t_2 \gamma \sigma\right) .
         \end{align*}

Define $\mathcal{D}_{\gamma, \sigma} := \left\{\boldsymbol{t} \in \mathbb{R}^{4}: \Lambda_{\gamma, \sigma}(\boldsymbol{t})<\infty\right\}. $ The following conditions can be verified:
\begin{enumerate}
	\item[(1)] the origin belongs to the interior of  $\mathcal{D}_{\gamma, \sigma}.$
	\item[(2)] $\Lambda^{\boldsymbol{M}}_{\gamma, \sigma}$ is lower semicontinuous function and differentiable throughout $\mathcal{D}_{\gamma, \sigma}$.
	\item[(3)] $\Lambda^{\boldsymbol{M}}_{\gamma, \sigma}$ is steep, namely, $\lim\limits_{n \rightarrow \infty}\left|\nabla \Lambda^{\boldsymbol{M}}_{\gamma, \sigma}(\boldsymbol{t_n})\right|=\infty$ whenever $\left\{\boldsymbol{t_n}\right\}$ is a sequence in $\mathcal{D}_{\gamma, \sigma},$ which converges to some boundary point of $\mathcal{D}_{\gamma, \sigma}$.
\end{enumerate}
Then the G{\"a}rtner-Ellis theorem (\cite{A. Dembo}) yields the large deviation principle for $\boldsymbol{M}^{(n)}$ with speed $\beta^{\prime}n$ and good rate function
\begin{align*}
	 I^{\boldsymbol{M}}_{\gamma, \sigma}(\boldsymbol x)=& \sup_{\boldsymbol t\in R^4 }\{ \left\langle \boldsymbol t, \boldsymbol x\right\rangle -\Lambda^{\boldsymbol{M}}_{\gamma, \sigma}(\boldsymbol t) \},\\
	 = &   \frac{1}{\gamma} g\left( \frac{x_2+\sqrt{\gamma}x_1}{1+\sigma}\right) +\frac{1}{\gamma\sigma} g\left(\frac{x_2-\sqrt{\gamma}\sigma x_1}{1+\sigma} \right)  +\infty\mathbf 1_{\{\boldsymbol{x} \notin D^ {\boldsymbol{M}}  \} },
\end{align*}
where $D^ {\boldsymbol{M}}:= \{ \boldsymbol{x} \in \mathbb R^4 :   x_3= \frac{\sigma}{1+\sigma}\left(\sqrt{\gamma}x_1+x_2\right),x_4=\sqrt{\gamma}x_1+x_2   \}. $ Similarly, for cases $ \sigma=0,0<\gamma \leq 1 $ and $\gamma=0, \sigma \geq 0, $ we get the limits respectively
$$\Lambda^{\boldsymbol{M}}_{\gamma, 0}(\boldsymbol{t})= -\frac{1}{\gamma} \log \left( 1-  t_1\sqrt{\gamma} -t_4\gamma \right) -\frac{1}{\sqrt{\gamma}}t_1 +t_2 $$
and 
$$ \Lambda^{\boldsymbol{M}}_{0, \sigma}(\boldsymbol{t})= \frac{1+\sigma}{2} t_1^2+(1+\sigma)t_2 + \sigma t_3 +(1+\sigma) t_4 . $$
Similarly, as for the case $0<\sigma \gamma \leq 1$, these limits imply the large deviation principle of $\boldsymbol{M}^{(n)}$ with speed  $\beta^{\prime}n$ and the good rate function $I_{\gamma, \sigma}^ {\boldsymbol{M}}$ defined in \eqref{Igs11}.
\end{proof}

\begin{lem}\label{ldp2} Assume the condition {\bf H} holds. Let $G_1$ and $G_2$ be two independent random variables with $G_1 \sim Gamma(a_n) $ and $G_2 \sim Gamma(b_n)$ and 
$$
\lim_{n\rightarrow \infty} \frac{a_n}{\beta^{\prime}n} = \lim_{n\rightarrow \infty} \frac{b_n}{\beta^{\prime}(p_1+p_2-n)} = 1. 
$$
Then, the vector $$\boldsymbol{P}^{(n)}=\left(\frac{G_{1}}{\beta^{\prime}n } , \frac{G_1+G_2}{\beta^{\prime }(p_1+p_2 )},\frac{G_1}{\beta^{\prime }(p_1+p_2 )}, \frac{G_1}{\beta^{\prime} \sqrt{np_1}}  \right) $$ satisfies a large deviation principle with speed $\beta^{\prime}n$ and good rate function  
 \begin{equation}\label{Igs2}
I_{\gamma, \sigma}^{\boldsymbol{P} }( \boldsymbol{x})= 
\begin{cases}
 g(x_1) +  \frac{1-c_{\gamma, \sigma}}{c_{\gamma,\sigma}}  g\left( \frac{x_2-c_{\gamma,\sigma} x_1}{1-c_{\gamma, \sigma}} \right)  +\infty\mathbf 1_{\{\boldsymbol{x} \notin D^ {\boldsymbol{P}}  \} }   , & 0<\sigma \gamma \leq 1 ;\\
   g(x_1) + \infty \mathbf 1_{\{\boldsymbol{x} \notin D^ {\boldsymbol{P}} \text{ or } x_2 \neq 1 \}} , & \sigma=0, \; 0<\gamma \leq 1 ;\\
 g(x_1) + \infty \mathbf 1_{\{\boldsymbol{x} \notin D^ {\boldsymbol{P}} \text{ or } x_2 \neq 1\}}, & \gamma=0, \;\sigma \geq 0;
\end{cases}
\end{equation}
where $c_{\gamma, \sigma} := \frac{\gamma\sigma}{1+\sigma} $ and $D^ {\boldsymbol{P}}:= \{ \boldsymbol{x} \in \mathbb R^4 :   x_3= c_{\gamma, \sigma}x_1, x_4 = \sqrt{\gamma}x_1   \}.$  
Furthermore, 
$$ \lim\limits_{\sigma \rightarrow 0}I_{\gamma, \sigma}^{\boldsymbol{P} }(x_1, 1, x_3, x_4)=   I_{\gamma, 0}^{\boldsymbol{P} }(x_1, 1, x_3, x_4)$$ and $$ \lim\limits_{\gamma \rightarrow 0}I_{\gamma, \sigma}^{\boldsymbol{P} }(x_1, 1, x_3, x_4)=   I_{0, \sigma }^{\boldsymbol{P} }(x_1, 1, x_3, x_4).$$

\end{lem}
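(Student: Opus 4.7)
My plan is to mirror the Gärtner-Ellis strategy already used in the proof of Lemma \ref{ldp1}, replacing the Gamma weights $\beta' p_1,\beta' p_2$ there by the new pair $\beta' n,\beta'(p_1+p_2-n)$. Since $G_1$ and $G_2$ are independent Gamma variables and $\beta' n\langle\boldsymbol t,\boldsymbol P^{(n)}\rangle$ is linear in $(G_1,G_2)$, with $t_3$ and $t_4$ coupling only to $G_1$, the rescaled cumulant generating function splits cleanly:
\[
(\beta' n)^{-1}\log\mathbb E\bigl[e^{\beta' n\langle\boldsymbol t,\boldsymbol P^{(n)}\rangle}\bigr]
=-\tfrac{a_n}{\beta' n}\log\!\bigl(1-t_1-(t_2+t_3)\tfrac{n}{p_1+p_2}-t_4\sqrt{\tfrac{n}{p_1}}\bigr)-\tfrac{b_n}{\beta' n}\log\!\bigl(1-t_2\tfrac{n}{p_1+p_2}\bigr).
\]
Under assumption \textbf{H} the ratios $n/(p_1+p_2)\to c_{\gamma,\sigma}$, $\sqrt{n/p_1}\to\sqrt{\gamma}$, $a_n/(\beta' n)\to 1$ and, in the regime $0<\gamma\sigma\le 1$, $b_n/(\beta' n)\to (1-c_{\gamma,\sigma})/c_{\gamma,\sigma}$, so the limit $\Lambda^{\boldsymbol P}_{\gamma,\sigma}(\boldsymbol t)$ is a sum of two logarithms to which the three Gärtner-Ellis hypotheses (origin in the interior, differentiability, steepness) apply exactly as in Lemma \ref{ldp1}.

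In the degenerate regimes $\sigma=0$ or $\gamma=0$, the ratio $b_n/(\beta' n)$ diverges while $n/(p_1+p_2)\to 0$, so I will extract the limit of the second log by a first-order Taylor expansion $-\log(1-y)=y+O(y^2)$, checking that the $O(y^2)$ correction vanishes because $b_n n/(p_1+p_2)^2\to 0$. This produces a linear-in-$t_2$ limit equal to $t_2$; simultaneously the first log simplifies (dropping the $(t_2+t_3)c_{\gamma,\sigma}$ term, and also the $t_4\sqrt{\gamma}$ term when $\gamma=0$). Gärtner-Ellis then yields the LDP with speed $\beta' n$ and rate function $I^{\boldsymbol P}_{\gamma,\sigma}(\boldsymbol x)=\sup_{\boldsymbol t}\{\langle \boldsymbol t,\boldsymbol x\rangle-\Lambda^{\boldsymbol P}_{\gamma,\sigma}(\boldsymbol t)\}$, which I then compute explicitly. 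In the case $0<\gamma\sigma\le 1$, since $t_3$ and $t_4$ appear only through the coefficients $c_{\gamma,\sigma}$ and $\sqrt{\gamma}$ in the first log, stationarity in $t_3$ and $t_4$ forces the affine relations $x_3=c_{\gamma,\sigma}x_1$ and $x_4=\sqrt{\gamma}x_1$ that define $D^{\boldsymbol P}$; the residual supremum in $(t_1,t_2)$ then decouples via a change of variable into two Legendre transforms of Gamma rate functions, yielding the claimed $g(x_1)+\frac{1-c_{\gamma,\sigma}}{c_{\gamma,\sigma}}g\!\bigl(\frac{x_2-c_{\gamma,\sigma}x_1}{1-c_{\gamma,\sigma}}\bigr)$. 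In the degenerate cases, the linear-in-$t_2$ term forces the hard constraint $x_2=1$, while $x_3,x_4$ collapse to the appropriate multiples of $x_1$, and only the $g(x_1)$ piece survives.

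The continuity statements at the boundaries $\sigma\to 0$ and $\gamma\to 0$ reduce to a Taylor expansion of $g$ at $1$: setting $y:=(x_2-cx_1)/(1-c)\to 1$ as $c=c_{\gamma,\sigma}\to 0$, and using $g(1+\varepsilon)=\varepsilon^2/2+O(\varepsilon^3)$, the $(1-c)/c$ prefactor produces a contribution of order $c\to 0$, so the extra $g$ term vanishes and one recovers $g(x_1)$. The main obstacle I anticipate is the degenerate-regime book-keeping: one must simultaneously handle $b_n/(\beta' n)\to\infty$ and $n/(p_1+p_2)\to 0$ via a careful Taylor expansion to produce the finite linear-in-$t_2$ limit, and then reconcile the resulting hard constraint $x_2=1$ in $I^{\boldsymbol P}_{\gamma,\sigma}$ with the boundary continuity claim; everything else is a direct transcription of the argument in Lemma \ref{ldp1}.
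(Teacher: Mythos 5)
Your proposal is correct and follows essentially the same route as the paper: compute the scaled log--moment generating function of $\beta' n\boldsymbol P^{(n)}$, pass to the limit under condition {\bf H}, and invoke the G\"artner--Ellis theorem. In fact you supply more detail than the paper's own proof, which merely displays the limiting $\Lambda^{\boldsymbol P}_{\gamma,\sigma}$ and says ``as in Lemma \ref{ldp1}'' for the Legendre transform, the degenerate-case Taylor expansion of the second logarithm, and the boundary continuity of the rate function; all of your additional computations check out.
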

\begin{proof}
The logarithm of the moment generating function of $\beta^{\prime}n \boldsymbol{P}^{(n)}$ is given by
\begin{align*}
&\log \mathbb{E}\left[\exp \left\{\beta^{\prime} n\left\langle\boldsymbol{t}, \boldsymbol{P}^{(n)}\right\rangle\right\}\right] \\
=& -a_n \log \left(1-   t_1-t_2 \frac{n}{p_1+p_2} -t_3 \frac{n}{p_1+p_2}-t_4 \sqrt{\frac{n}{p_1}} \right) - b_n\log\left( 1-t_2 \frac{n}{p_1+p_2}\right) . 
\end{align*}
Let 
\begin{align*}
	\Lambda^{\boldsymbol{P}}_{\gamma, \sigma}(\boldsymbol t) :=& \lim_{n \rightarrow \infty}( \beta^{\prime}n )^{-1} \log \mathbb{E}\left[ \exp\left\{ \beta^{\prime}n \left\langle \boldsymbol t, \boldsymbol{P}^{(n)}\right\rangle\right\} \right] . \end{align*}
Under the condition {\bf H}, we obtain the following limit
 \begin{equation*}
\Lambda^{\boldsymbol{P}}_{\gamma, \sigma}(\boldsymbol t)= 
\begin{cases}
 -\log \left( 1-  t_1 -(t_2+t_3)c_{\gamma, \sigma}  - t_4\sqrt{\gamma} \right)  -\frac{1-c_{\gamma, \sigma} }{c_{\gamma, \sigma}}\log(1-t_2c_{\gamma, \sigma} ), & 0<\sigma \gamma \leq 1 ;\\
-\log \left( 1-  t_1 -t_4\sqrt{\gamma} \right)  +t_2 , & \sigma=0,0<\gamma \leq 1 ;\\
-\log \left( 1-  t_1 \right)  +t_2, & \gamma=0, \sigma \geq 0.
\end{cases}
\end{equation*}
As in Lemma \ref{ldp1}, this implies the large deviation principle of $\boldsymbol{P}^{(n)}$ with speed  $\beta^{\prime}n$ and the stated rate function $I_{\gamma, \sigma}^{\boldsymbol{P} }$ defined in \eqref{Igs2}.
\end{proof}

\begin{proof}[\bf Proof of Theorem \ref{ldpjacobi}]  
Recall 
$$ \mathbf{m}\left( \mu_n \right) = \left( \widetilde{\mathcal{J}}^r_{n}\left(1, 1\right) \right)_{r     \geq 1} ,$$ which is injective and continuous from $\mathcal{M}_{m, d}^1\left(\left[- \frac{1}{\sqrt{\gamma}} ,  \frac{1}{\sqrt{\gamma}\sigma } \right]\right)$ to $\mathbb{R}^{\mathbb{N}}.$ 
For the large deviation of $\mu_n$ in $\mathcal{M}_{m, d}^1\left(\left[- \frac{1}{\sqrt{\gamma}} ,  \frac{1}{\sqrt{\gamma}\sigma } \right]\right),$  it is enough to obtain that of $\mathbf{m}\left( \mu_n \right)$ in $\mathbb{R}^{\mathbb{N}}.$ Recall the rate function related to $\mu_n$ is $I_{\gamma, \sigma}$ and let $\widetilde{I}$ be the corresponding rate function of $\mathbf{m}(\mu_n).$ Via the argument in \cite{Gamboa11},  for any $\mu\in \mathcal{M}_{m, d}^1\left(\left[- \frac{1}{\sqrt{\gamma}} ,  \frac{1}{\sqrt{\gamma}\sigma } \right]\right)$ and any $(m_1, \cdots)\in \mathbb{R}^{\mathbb{N}}$ such that  $m_i\equiv \langle \mu, x^i\rangle, $ it holds that 
  $$I_{\gamma, \sigma}(\mu)=\widetilde{I}(m_1, \cdots).$$
Now we work on $\mathbf{m}\left( \mu_n \right).$
Notice that there exists a sequence of polynomials $f_r$ of $ r $ variables, such that
$$
 \widetilde{\mathcal{J}}^r_{n}\left(1, 1\right) = f_r\left(\left(\widetilde{\mathcal{J}}_{n}\left(i, i\right),  \widetilde{\mathcal{J}}_{n}\left(j, j+1\right)\right)_{1\leq i \leq \lceil r / 2\rceil, 1\leq j \leq \lfloor r / 2\rfloor}  \right)
$$
for $r \leq 2 n-1$.

The entries of the rescaled tridiagonal matrix $ \widetilde{\mathcal{J}}_{n} $ are the Jacobi coefficients of the measure $\mu_n$, which are given by \eqref{jnak} and \eqref{jnbk}. In particular, $\left\{ \widetilde{\mathcal{J}}_{n}\left(k, k\right),  \widetilde{\mathcal{J}}_{n}\left(k, k+1\right) \right\}_{k \geq 1}$  are continuous functions of  $$ \boldsymbol{ \bar C}^{(k, n)}:= \left( c_k, \frac{p_1+p_2}{p_1}c_k, \frac{(p_1+p_2)c_k-p_1}{\sqrt{np_1}} \right)$$ and$$ \boldsymbol{ \bar S}^{(k, n)}:= \left(s_k, \frac{p_1+p_2}{\sqrt{np_1}}s_k, \frac{p_1+p_2}{n} s_k \right). $$
Recall $$ c_{k} \sim \operatorname{Beta}\left( \beta^{\prime} \left(p_1-k+1  \right) , \beta^{\prime} \left(p_2-k+1 \right) \right), 1\leq k \leq n . $$ Then $\boldsymbol{ \bar C}^{(k, n)}\mathbf 1_{\{k \leq n\}}$ has the same distribution as
$$ \psi\left(\frac{p_2G_1-p_1G_2 }{\beta^{\prime} \sqrt{np_1}p_2 },\frac{G_1+G_2}{\beta^{\prime}p_2}, \frac{G_1}{\beta^{\prime}p_2}, \frac{(p_1+p_2)G_1}{\beta^{\prime}p_1p_2} \right)\mathbf 1_{\{k \leq n\}} = : \psi \left(  \boldsymbol{M}^{(k, n)} \right) \mathbf 1_{\{k \leq n\}}, $$
where $G_{1} \sim \operatorname{Gamma}\left(\beta^{\prime}\left(p_1-k+1  \right)\right)$ and $G_{2} \sim \operatorname{Gamma}\left(p_2-k+1 \right)$ are independent and 
\begin{align}\label{defpsi}
	\psi\left(x_1, x_2, x_3, x_4\right)=\left(\frac{x_3}{x_2} ,\frac{x_4}{x_2}, \frac{x_1}{x_2} \right) . 
\end{align}
For $k$ fixed, by Lemma \ref{ldp1} and the contraction principle, $\boldsymbol{ \bar C}^{(k, n)} \mathbf 1_{\{k \leq n\}} $ satisfies the large deviation principle with speed $\beta^{\prime}n $ and good rate function $ I_{\gamma, \sigma}^{\boldsymbol{ \bar C} }$ defined as follows
\begin{align*}
	 I_{\gamma, \sigma}^{\boldsymbol{ \bar C} }(\boldsymbol y) & = \inf \left\{ I_{\gamma, \sigma}^{\boldsymbol{ M}}\left(\boldsymbol x \right) \mid \psi\left(\boldsymbol x \right)=\boldsymbol y , \boldsymbol x \in \mathbb R^4, \boldsymbol y \in \mathbb R^3 \right\} ,
\end{align*}
where $I_{\gamma, \sigma}^{\boldsymbol{ M}},$ defined in \eqref{Igs11}, is the good rate function of the random vectors $ \boldsymbol{M}^{(k, n)}.$ Notice that for any $\boldsymbol y=(y_1, y_2, y_3)\in \mathbb{R}^3$ satisfying \begin{equation}\label{conditionony} y_1=\frac{\sigma}{1+\sigma}(\sqrt{\gamma}y_3+1) \quad \text{and} \quad y_2=\sqrt{\gamma}y_3+1,\end{equation} 
 there is a unique $\boldsymbol{x}\in D^{\boldsymbol M}$ with $x_2=1+\sigma$ such that $\psi\left(\boldsymbol{x}\right)=\boldsymbol y,$ noted by $\psi_{\sigma}^{-}\left(\boldsymbol y  \right).$  That is 
\begin{align}\label{defpsi2}
	\psi_{\sigma}^{-}\left(\boldsymbol y  \right):=\left( (1+\sigma)y_3, 1+\sigma, \sigma(\sqrt{\gamma}y_3+1) , (1+\sigma)(\sqrt{\gamma}y_3+1) \right).
\end{align}
Thus, \begin{align*}
	 I_{\gamma, \sigma}^{\boldsymbol{ \bar C} }(\boldsymbol y) 	 &= I_{\gamma, \sigma}^{\boldsymbol{ M}}\left( \psi_{\sigma}^{-}\left(\boldsymbol y  \right)\right)\end{align*}
	 for $\boldsymbol{y}\in\mathbb{R}^3$ satisfying \eqref{conditionony} and $+\infty$ otherwise. 
By definition, it holds that $$ \boldsymbol{ \bar S}^{(k, n)}=\psi \left(\frac{G_{1}}{\beta^{\prime}n } , \frac{G_1+G_2}{\beta^{\prime }(p_1+p_2 )},\frac{G_1}{\beta^{\prime }(p_1+p_2 )}, \frac{G_1}{\beta^{\prime} \sqrt{np_1}}  \right) $$ with $G_{1} \sim \operatorname{Gamma}\left(\beta^{\prime} \left(n-k \right)\right)$, and $G_{2} \sim \operatorname{Gamma} \left(\beta^{\prime}(p_1+p_2-n-k+1)  \right) $ being independent random variables. For any $k\geq 1$ fixed, by  Lemma \ref{ldp2} and the contraction principle once more, $\boldsymbol{ \bar S}^{(k, n)}\mathbf 1_{\{k \leq n\}} $ satisfies the large deviation principle with speed $\beta^{\prime} n$ and good rate function \begin{align*}
	  I_{\gamma, \sigma}^{\boldsymbol{\bar S} }(\boldsymbol y) 
	 & = I_{\gamma, \sigma}^{\boldsymbol{P} }\left(\phi_{\sigma}^-(\boldsymbol y)\right) \end{align*}
with $$\phi_{\sigma}^-(\boldsymbol y)=\left( y_3, 1, c_{\gamma, \sigma} y_3, \sqrt{\gamma}y_3 \right)$$ 
for $\boldsymbol{y}\in\mathbb{R}^3$ satisfying \begin{equation}\label{conditionony2}y_1=c_{\gamma, \sigma} y_3 \quad \text{and} \quad y_2=\sqrt{\gamma} y_3\end{equation} and $+\infty$ otherwise.

Fix $\ell >1.$ By the independence of  $\{\boldsymbol{ \bar C}^{(i, n)}\}_{1\le i\le l}$ and $\{\boldsymbol{ \bar S}^{(j, n)}\}_{1\le j\le l-1}$, the sequence $\left( \boldsymbol{ \bar C}^{(i, n)},\boldsymbol{ \bar S}^{(j, n)} \right)_{1\leq i \leq \ell, 1\leq j \leq \ell-1}$ satisfies the large deviation principle in $\mathbb{R}^{2 \ell-1}$ with speed $\beta^{\prime} n$ and rate function $$\sum_{k=1}^{\ell} I_{\gamma, \sigma}^{\boldsymbol{ \bar C}}(\boldsymbol  x^{(k)}) +\sum_{k=1}^{\ell-1}  I_{\gamma, \sigma}^{\boldsymbol{ \bar S}}(\boldsymbol y^{(k)}).$$

Since $ \left( \widetilde{\mathcal{J}}^r_{n}\left(1, 1\right) \right)_{ 1\leq r\leq 2\ell-1 } $  is a continuous function of $\left( \boldsymbol{ \bar C}^{(i, n)},\boldsymbol{ \bar S}^{(j, n)} \right)_{1\leq i \leq \ell, 1\leq j \leq \ell-1},$ by the contraction principle, the sequence $ \left( \widetilde{\mathcal{J}}^r_{n}\left(1, 1\right) \right)_{ 1\leq r\leq 2\ell-1 } $ satisfies the large deviation principle in $\mathbb{R}^{2 \ell-1}$ with speed $\beta^{\prime} n$ and rate function $\widetilde{I}_{2 \ell-1}$ defined as follows.
Given $\left(m_i\right)_{1\leq i\leq 2 \ell-1} \in \mathbb R^{2\ell -1}.$
 Notice that there is at most  one tridiagonal matrix $J_{\ell}$ built from $\left(a_i, b_j\right)_{1\le i\le \ell, 1\le j\le \ell-1}$ as in  \eqref{defjn} such that
\begin{align}\label{defmr}
	J_{\ell}^r\left(1, 1\right) =m_r, \quad 1\le r\le 2\ell-1.
\end{align}  
For $\left(m_i\right)_{1\leq i\leq 2 \ell-1}\in\mathbb{R}^{2\ell -1}$ such that$\left(a_i, b_j\right)_{1\le i\le \ell, 1\le j\le \ell-1}$ can be decomposed uniquely into $\left(u_i, v_j\right)_{1\le i\le \ell, 1\le j\le \ell-1}, $ define  
$$
\widetilde{I}_{2 \ell-1}\left(m_1, \ldots, m_{2 \ell-1}\right)= \sum_{k=1}^{\ell} I_{\gamma, \sigma}^{\boldsymbol{ \bar C}}(\boldsymbol  x^{(k)}) +\sum_{k=1}^{\ell-1}  I_{\gamma, \sigma}^{\boldsymbol{ \bar S}}(\boldsymbol y^{(k)}).
$$
Here $\boldsymbol{x}^{(k)}$ satisfy \eqref{conditionony} and $\boldsymbol{y}^{(k)}$ satisfy  \eqref{conditionony2} with $x_3^{(k)}=u_k, 1\le k\le \ell$ and $y_3^{(k)}=v_k, 1\le k\le \ell-1.$ 
Otherwise, $\widetilde{I}_{2 \ell-1}\left(m_1, \ldots, m_{2 \ell-1}\right)=+\infty.$ Applying the Dawson-Gartner theorem, $ \mathbf m\left( \mu_n \right) $ satisfies the large deviation principle in $\mathbb R^{ \mathbb N }$ with speed $\beta^{\prime}n$ and good rate function  
\begin{align*}
	\widetilde{I}(m_1, \cdots)=\sup\{\widetilde{I}_{2l-1}(m_1, \cdots, m_{2\ell-1}):\; \ell \geq 1\}= \sum_{k=1}^\infty I_{\gamma, \sigma}^{\boldsymbol{ \bar C}}(\boldsymbol  x^{(k)}) +\sum_{k=1}^\infty  I_{\gamma, \sigma}^{\boldsymbol{ \bar S}}(\boldsymbol y^{(k)}). 
\end{align*}
Hence, the spectral measure $\mu_{n}$ satisfies in $\mathcal{M}_{m, d}^1\left(\left[- \frac{1}{\sqrt{\gamma}} ,  \frac{1}{\sqrt{\gamma}\sigma } \right]\right)$ a large deviation  with speed $\beta^{\prime} n$ and rate function 
\begin{equation}\label{Igsr1}
I_{\gamma, \sigma} ( \mu ) = \sum_{k=1}^\infty I_{\gamma, \sigma}^{\boldsymbol{ \bar C}}(\boldsymbol  x^{(k)}) +\sum_{k=1}^\infty  I_{\gamma, \sigma}^{\boldsymbol{ \bar S}}(\boldsymbol y^{(k)}),
 \end{equation}
where $\boldsymbol{x}^{(k)}$ satisfy \eqref{conditionony} and $\boldsymbol{y}^{(k)}$ verify \eqref{conditionony2}
and moreover, $(x_3^{(k)})_{k\geq 1}$ and $(y_{3}^{(k)})_{k\geq 1}$ are the unique solutions to the equation \eqref{xyde1} with $(a_k)_{k\geq 1}$ and $(b_k)_{k\geq 1}$ are the corresponding Jacobi matrix coefficients of $\mu$
and $x_1^{(0)}=y_1^{(0)}=y_2^{(0)}=0.$

	The function $g(x)= x -\log x -1 $ attains its minimal value $0$ at $x=1$. Therefore, the minimum of $I_{\gamma, \sigma}$ is attained at $x_3^{(k)}=0$ and  $y_3^{(k)}=1$ for all $k\geq 1.$ By \eqref{xyde1}, the corresponding Jacobi coefficients should be 
	$$a_1=0, \; b_1=\frac{1}{\sqrt{1+\sigma}}, \; a_k=\frac{\sqrt{\gamma}(1-\sigma)}{1+\sigma}, \; b_k=\frac{\sqrt{1+\sigma-\sigma \gamma}}{1+\sigma} $$
	for $k\geq2$. Therefore, the spectral measure of the related Jacobi matrix is $\tilde{\nu}_{\gamma, \sigma}$, whose density function is given by \eqref{dens}.

\end{proof}

\begin{lem}\label{slutcoro}
Given an integer $m\geq 1.$ Let $\left\{\xi_{n,1}\right\}, \ldots, \left\{\xi_{n, m} \right\} $  be $m$ sequences of independent random variables such that for $1\leq i \leq m,$ $ \xi_{n, i} \to \xi_{ i}$ weakly, as  $ n \to \infty.$
Let $\left\{\eta_{n,1}\right\}, \ldots, \left\{\eta_{n, m}\right\}$  be $m$ sequences of random variables such that for $1\leq i \leq m,$ $ \eta_{n, i} \rightarrow a_{ i} $ in probability   as $n \to \infty,$ where $a_1, \ldots, a_m$ are constants. Then, 
$$
\sum_{i=1}^m \eta_{n, i}\xi_{n, i} \to \sum_{i=1}^m a_{i}\xi_{i} 
$$ 
weakly as $n \to \infty.$
\end{lem}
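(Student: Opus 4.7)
The plan is to reduce the statement to the classical (univariate) Slutsky theorem by first upgrading the marginal weak convergence of the independent sequences to joint weak convergence on $\mathbb{R}^m$, and then splitting each summand into a piece with deterministic coefficients plus a vanishing remainder.

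First, I would exploit the independence assumption: since $\xi_{n,1},\dots,\xi_{n,m}$ are mutually independent for each $n$, their joint characteristic function factorises, and the marginal convergences $\xi_{n,i}\to\xi_i$ yield
$$
\mathbb{E}\Bigl[\exp\Bigl(\mathrm{i}\sum_{j=1}^m t_j\xi_{n,j}\Bigr)\Bigr]=\prod_{j=1}^m\mathbb{E}[\mathrm{e}^{\mathrm{i}t_j\xi_{n,j}}]\longrightarrow\prod_{j=1}^m\mathbb{E}[\mathrm{e}^{\mathrm{i}t_j\xi_j}],
$$
which is the characteristic function of the product distribution of independent copies of $\xi_1,\dots,\xi_m$. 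Hence $(\xi_{n,1},\dots,\xi_{n,m})\to(\xi_1,\dots,\xi_m)$ weakly on $\mathbb{R}^m$, and the continuous mapping theorem applied to the linear functional $(x_1,\dots,x_m)\mapsto\sum_i a_ix_i$ gives $\sum_{i=1}^m a_i\xi_{n,i}\to\sum_{i=1}^m a_i\xi_i$ weakly.

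Next, I would decompose
$$
\sum_{i=1}^m\eta_{n,i}\xi_{n,i}=\sum_{i=1}^m a_i\xi_{n,i}+\sum_{i=1}^m(\eta_{n,i}-a_i)\xi_{n,i},
$$
and show the second sum tends to $0$ in probability. Each $\{\xi_{n,i}\}_n$ is weakly convergent, hence tight, so for any $\varepsilon>0$ one can choose $K>0$ with $\mathbb{P}(|\xi_{n,i}|>K)<\varepsilon$ uniformly in $n$; combined with $\eta_{n,i}-a_i\to 0$ in probability, this gives $(\eta_{n,i}-a_i)\xi_{n,i}\to 0$ in probability for each $i$, and a finite sum of such terms still converges to $0$ in probability. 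A final invocation of the classical Slutsky theorem (weak limit plus an in-probability null perturbation) delivers the claimed weak convergence of $\sum_{i=1}^m\eta_{n,i}\xi_{n,i}$ to $\sum_{i=1}^m a_i\xi_i$.

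The only genuinely non-automatic ingredient is the promotion of marginal weak convergence to joint weak convergence of $(\xi_{n,1},\dots,\xi_{n,m})$; this is exactly the place where the mutual independence across $i$ is indispensable, since without it marginal weak convergence never implies joint weak convergence. Once this step is secured, the remainder is a routine application of tightness and Slutsky's theorem.
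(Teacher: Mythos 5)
Your proof is correct and follows essentially the same route as the paper's: split $\sum_i \eta_{n,i}\xi_{n,i}$ into $\sum_i a_i\xi_{n,i}$ plus a remainder, use independence to get weak convergence of $\sum_i a_i\xi_{n,i}$, show the remainder vanishes in probability, and finish with Slutsky. The paper merely compresses two of your steps — invoking Slutsky directly to obtain $(\eta_{n,i}-a_i)\xi_{n,i}\to 0$ in probability rather than unpacking the tightness argument, and asserting the weak limit of $\sum_i a_i\xi_{n,i}$ from independence without writing out the characteristic-function factorisation — so the differences are only in the level of detail.
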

\begin{proof}
	By using Slutsky's theorem, for $1\leq i \leq m,$ $ \left(\eta_{n, i}- a_{i}\right) \xi_{n, i} \to 0 $ in probability   as $n \to \infty.$  This implies that $$\sum_{i=1}^m \left(\eta_{n, i}- a_{i}\right) \xi_{n, i} \to 0 $$ in probability   as $n \to \infty,$ since $m$ is a fixed integer. It follows from the independence of radom variable sequences $\{\xi_{n, i}\}_{n\geq 1, 1\le i\le m}$ that $$\sum_{i=1}^m a_i \xi_{n, i}  \xrightarrow{\rm weakly}{} \sum_{i=1}^m a_i \xi_{ i} , \text{ as } n\to \infty.$$
By using Slutsky's theorem again, we have 
$$\sum_{i=1}^m \left(\eta_{n, i}- a_{i}\right) \xi_{n, i} +\sum_{i=1}^m a_i \xi_{n, i}  \xrightarrow{\rm weakly}{} \sum_{i=1}^m a_i \xi_{ i} .$$
This completes the proof.
\end{proof}

\bigskip
\subsection*{Acknowledgments} The authors would like to thank Professor Yu-Hui Zhang for his valuable conversations. 
\bigskip
\subsection*{Data Availability} All data generated or analyzed during this study are included in this published article.
\bigskip
\subsection*{Conflict of interest} The authors declare that they have no conflict of interest.

\end{document}